\documentclass[11pt,a4paper]{article}
\usepackage[T1]{fontenc}
\usepackage[utf8]{inputenc}
\usepackage[english]{babel}
\usepackage{lmodern,microtype,amsmath,amssymb,amsthm,mathtools}
\usepackage[a4paper,margin=2.5cm]{geometry}
\usepackage{enumitem,needspace}
\usepackage{mathrsfs}
\usepackage[numbers]{natbib}
\usepackage[hidelinks]{hyperref}
\newcommand{\resultcite}[2]{[#1,~\citealp{#2}]}
\newtheorem{theorem}{Theorem}[section]
\newtheorem{proposition}[theorem]{Proposition}
\newtheorem{corollary}[theorem]{Corollary}
\newtheorem{lemma}[theorem]{Lemma}
\theoremstyle{remark}
\newtheorem{remark}[theorem]{Remark}
\newcommand{\B}{\mathcal B}
\newcommand{\K}{\mathcal{K}}
\newcommand{\Hq}{\mathcal{H}_0}
\newcommand{\sg}{\sigma}

\DeclareMathOperator{\codim}{codim}
\newcommand{\Ran}{\mathcal{R}}
\newcommand{\Nul}{\mathcal{N}}
\DeclareMathOperator{\acc}{acc}
\setlist[enumerate]{label=(\roman*),leftmargin=2.2em,itemsep=.25em,topsep=.3em}
\title{Algebraic Direct Sums and Generalized Drazin Invertibility}

\author{Zakariae Aznay \qquad Fadil Chabbabi\\[-.5mm]
\small Laboratory (A.F.N.L.A), Department of Mathematics, Faculty of Sciences,\\[-.5mm]
\small Abdelmalek Essaâdi University, Tetouan, Morocco\\[-.5mm]
\small \texttt{aznayzakariae@gmail.com} \quad
\texttt{f.chabbabi@uae.ac.ma}}
\date{}

\hypersetup{
  pdftitle={Algebraic Direct Sums and Generalized Drazin Invertibility},
  pdfauthor={Zakariae Aznay and Fadil Chabbabi}
}

\begin{document}
\maketitle

\begin{abstract}
Let $T$ be a bounded linear operator on a complex Banach space $X$.
We prove that the algebraic decomposition
\[
X=\K(T)\oplus \Hq(T)
\]
forces both summands to be closed and hence characterizes generalized
Drazin invertibility. This improves the classical characterizations
of Mbekhta~\resultcite{Theorem~1.6}{MbekhtaKato}, Schmoeger~\resultcite{Theorem~4}{Schmoeger}, and Daji\'c and
Koliha~\resultcite{Theorem~6.7}{DajicKoliha}, by removing their closedness
assumptions. We also show that $T$ is generalized Drazin invertible
if and only if
\[
\dim\bigl(\K(T)\cap \Hq(T)\bigr)<\infty
\quad\text{and}\quad
\codim\bigl(\K(T)+\Hq(T)\bigr)<\infty.
\]
For ordinary Drazin invertibility, we obtain analogous finite-defect
characterizations involving the hyperkernel and the hyperrange.
We also give a characterization using pointwise nilpotence on an invariant
summand. Finally, we consider arbitrary invariant algebraic decompositions
and give a counterexample when the inverse of the restriction to
one summand is not assumed bounded.
\end{abstract}

\noindent\textbf{Keywords.}
Generalized Drazin invertibility;
Drazin invertibility; analytic core;
quasinilpotent part; algebraic direct sum;
invariant subspace.

\medskip
\noindent\textbf{2020 Mathematics Subject Classification.}
Primary 47A10; Secondary 47A05, 47A11, 47A15.

\section{Introduction}
Let $X$ be a complex Banach space and let $\B(X)$ denote the algebra of
all bounded linear operators on $X$. For $T\in\B(X)$, its quasinilpotent
part and analytic core are, respectively,
\[
\Hq(T)=\left\{x\in X:\lim_{n\to\infty}\|T^nx\|^{1/n}=0\right\},
\]
\begin{equation}\label{eq:K}
\K(T)=\left\{x\in X:\begin{array}{l}
\text{there are }c>0\text{ and }(x_n)_{n\geq0}\subset X\text{ such that}\\
x_0=x,\quad Tx_{n+1}=x_n\ (n\geq0),\\
\|x_n\|\leq c^n\|x\|\ (n\geq0)
\end{array}\right\}.
\end{equation}
Both are $T$-invariant linear subspaces and need not be closed; moreover, $T\K(T)=\K(T)$ (see \resultcite{Theorem~2(a)}{Schmoeger}). For any
$T$-invariant subspace $M$, we write $T_M$ for the restriction of $T$ to
$M$, endowed with the norm inherited from $X$. We use $X=M\oplus N$ to
mean the algebraic direct sum $X=M+N$ and $M\cap N=\{0\}$, without
assuming the summands closed. When both summands are closed, this is a
topological direct sum. We denote by $c_0(X)$ the complex Banach space
of all sequences $(x_n)_{n\geq0}$ in $X$ such that $\|x_n\|\to0$,
endowed with the norm $\|(x_n)\|_\infty=\sup_{n\geq0}\|x_n\|$.
We denote by $\Ran(T)$ and $\Nul(T)$ the range and kernel of $T$,
respectively. The hyperkernel and hyperrange of $T$ are defined by
\[
\Nul^\infty(T)=\bigcup_{n\geq1}\Nul(T^n),\qquad
\Ran^\infty(T)=\bigcap_{n\geq1}\Ran(T^n).
\]
Recall that $T$ is Drazin invertible if there are closed $T$-invariant
subspaces $M,N$ such that $X=M\oplus N$, $T_M$ is invertible and
$T_N$ is nilpotent. Equivalently, the kernels and ranges of the powers
of $T$ both stabilize after finitely many steps; see
\resultcite{Theorem~3}{Schmoeger}, with the invertible case added separately.

Recall that $T$ is generalized Drazin invertible (or Koliha--Drazin
invertible) if it admits a topological decomposition $X=M\oplus N$
into closed $T$-invariant subspaces such that $T_M$ is invertible and
$T_N$ is quasinilpotent. By Koliha's spectral characterization, this is equivalent to
$0\notin\acc\sigma(T)$~\resultcite{Theorems~4.2 and~7.1}{Koliha}. Mbekhta studied the analytic core and the quasinilpotent part
and showed, for operators on Hilbert spaces, that $T$ is generalized
Drazin invertible if and only if $X=\K(T)\oplus\Hq(T)$ is a
topological direct sum~\resultcite{Theorem~1.6}{MbekhtaKato}. See also~\cite{Mbekhta}
for further properties of these subspaces. Schmoeger obtained an
algebraic version assuming that
$\K(T)$ is closed~\resultcite{Theorem~4}{Schmoeger}, and Daji\'c and Koliha showed that
the closedness of either summand suffices~\resultcite{Theorem~6.7}{DajicKoliha}.
The formulations concerning an isolated spectral point are understood
with the invertible case added separately.
We first remove this additional closedness assumption. We then give a
characterization involving the dimension of $\K(T)\cap \Hq(T)$ and the
codimension of $\K(T)+\Hq(T)$. We also give new characterizations of Drazin invertibility using the
hyperkernel and the hyperrange. Finally, we examine algebraic
decompositions into arbitrary invariant subspaces and show, by an example
on a separable Hilbert space, why the boundedness of an inverse restriction
matters in the generalized Drazin case. We also derive a Drazin
characterization for pointwise nilpotent summands.

\section{Preliminary results on the quasinilpotent part and the analytic core}\label{sec:structure}
We begin with a result concerning the analytic core which will be useful in the next section. For every integer $m\geq1$ define
\begin{equation}\label{eq:Ym}
Y_m=\bigl\{y=(y_n)_{n\geq0}\in c_0(X):mTy_{n+1}=y_n\ (n\geq0)\bigr\}
\end{equation}
and let
\begin{equation}\label{eq:Em}
E_m:Y_m\to X,\qquad E_m(y)=y_0,\qquad \mathcal{K}_m=E_m(Y_m).
\end{equation}

\begin{proposition}\label{prop:Banach-images}
Each $Y_m$ is a Banach space (separable when $X$ is separable), the map $E_m$ is linear and bounded, and
\[
\K(T)=\bigcup_{m\geq1}\mathcal{K}_m.
\]
\end{proposition}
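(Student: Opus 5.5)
The plan is to verify the three assertions in turn; the substantive part is the set equality, which reduces to choosing $m$ so as to turn geometric growth into geometric decay.

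\emph{$Y_m$ is a Banach space.} I will show that $Y_m$ is a closed linear subspace of $c_0(X)$. Since $c_0(X)$ with $\|\cdot\|_\infty$ is a Banach space (separable when $X$ is, as a standard fact about $c_0$-sums), closedness suffices, and separability passes to subspaces of separable metric spaces. Linearity of the defining relations $mTy_{n+1}=y_n$ is clear. For closedness, take $y^{(k)}\to y$ in $c_0(X)$. For each fixed $n$ we have $y^{(k)}_n\to y_n$ in $X$, and $T$ is bounded, so $mTy_{n+1}=\lim_k mTy^{(k)}_{n+1}=\lim_k y^{(k)}_n=y_n$. The map $E_m$ is evidently linear, and $\|E_m y\|=\|y_0\|\le\|y\|_\infty$, so it has norm at most $1$.

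\emph{The inclusion $\mathcal K_m\subset\K(T)$.} Let $y\in Y_m$ and $x=y_0$. If $x=0$ there is nothing to prove, since $0\in\K(T)$. Otherwise set $x_n=m^ny_n$. Then $x_0=x$, and $Tx_{n+1}=m^{n+1}Ty_{n+1}=m^ny_n=x_n$. The bound is $\|x_n\|\le m^n\|y\|_\infty$, but the definition \eqref{eq:K} requires a bound of the form $c^n\|x\|$. I will absorb the constant by choosing $c=m\max(1,\|y\|_\infty/\|x\|)$. For $n\ge1$ this gives $c^n\ge m^n\|y\|_\infty/\|x\|$, hence $\|x_n\|\le c^n\|x\|$. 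For $n=0$ the inequality $\|x_0\|\le\|x\|$ holds trivially. So $x\in\K(T)$.

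\emph{The inclusion $\K(T)\subset\bigcup_{m\geq1}\mathcal K_m$.} Let $x\in\K(T)$, with $c$ and $(x_n)$ as in \eqref{eq:K}. I will pick an integer $m>c$ and set $y_n=m^{-n}x_n$. Then $\|y_n\|\le (c/m)^n\|x\|\to0$, so $y\in c_0(X)$. The recursion holds because $mTy_{n+1}=m^{-n}Tx_{n+1}=m^{-n}x_n=y_n$. Finally $y_0=x$, so $x=E_m(y)\in\mathcal K_m$.

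The only point needing care is the constant adjustment in the inclusion $\mathcal K_m\subset\K(T)$, where the bound has to be made relative to $\|x\|$ rather than to $\|y\|_\infty$. Everything else is routine.
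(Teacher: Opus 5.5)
Your proof is correct and follows the paper's argument step for step. The steps are: closedness of $Y_m$ in $c_0(X)$ (you check it with sequences, while the paper writes $Y_m$ as an intersection of kernels of the bounded maps $y\mapsto mTy_{n+1}-y_n$); the rescaling $x_n=m^ny_n$ with $c=m\max\{1,\|y\|_\infty/\|x\|\}$; and, conversely, $y_n=m^{-n}x_n$ for an integer $m>c$.
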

\begin{proof}
For fixed $n$, the map $y\mapsto mTy_{n+1}-y_n$ is linear and
bounded from $c_0(X)$ to $X$, since
$\|mTy_{n+1}-y_n\|\leq (m\|T\|+1)\|y\|_\infty$.
Hence $Y_m$, the intersection of its kernels over $n\geq0$,
is closed in $c_0(X)$. It is therefore a Banach space. If $X$ is
separable, then $c_0(X)$ is separable, and hence $Y_m$ is separable.
Moreover, $E_m$ is linear and bounded, since
$\|E_m(y)\|=\|y_0\|\leq\|y\|_\infty$ for every $y\in Y_m$.

Take $x\in \mathcal{K}_m$. If $x=0$ it lies in $\K(T)$. Otherwise choose $y\in Y_m$
with $y_0=x$; write $M=\sup_n\|y_n\|<\infty$. Set $x_n=m^ny_n$. Then
$x_0=x$ and
\[
Tx_{n+1}=m^{n+1}Ty_{n+1}=m^ny_n=x_n.
\]
Choose $c=m\max\{1,M/\|x\|\}$. For $n\geq1$, since $M\geq\|x\|$,
we have
\[
\|x_n\|\leq m^nM\leq c^n\|x\|;
\]
the bound at $n=0$ is equality. This proves $x\in \K(T)$.

Conversely, if $x\in \K(T)$, choose a backward chain $x_n$ and $c>0$ as in
\eqref{eq:K}. Take an integer $m>c$ and put $y_n=m^{-n}x_n$. Then
\[
\|y_n\|\leq(c/m)^n\|x\|\longrightarrow0,\qquad mTy_{n+1}=y_n.
\]
Thus $y\in Y_m$ and $E_m(y)=x$, proving the converse inclusion.
\end{proof}
\begin{lemma}\label{lem:pk} 
Let $X$ be a complex Banach space and $T\in\mathcal{B}(X)$. For $k\ge1$ and $x\in X$, put
\[
p_{k}(x)=\sup_{r\ge0} k^r\|T^rx\|\in[0,\infty].
\]
Then $x\in \Hq(T)$ if and only if $p_k(x)<\infty$ for every $k\ge1$.
\end{lemma}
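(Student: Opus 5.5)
The plan is to prove both implications straight from the definition of $\Hq(T)$, comparing the $n$-th root limit with the geometric weights $k^r$; no earlier result of the paper is needed.

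\emph{Forward direction.} Suppose $x\in\Hq(T)$ and fix $k\ge1$. Since $\|T^rx\|^{1/r}\to0$, there is $r_0$ such that $\|T^rx\|^{1/r}\le 1/(2k)$ for all $r\ge r_0$. For these $r$ this gives $k^r\|T^rx\|\le 2^{-r}\le1$. The remaining indices $0\le r<r_0$ form a finite set, and each term $k^r\|T^rx\|$ is finite. Hence the supremum $p_k(x)$ is bounded by the maximum of $1$ and these finitely many numbers, so it is finite.

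\emph{Converse.} Suppose $p_k(x)<\infty$ for every $k\ge1$, and fix $\varepsilon>0$. Choose an integer $k>1/\varepsilon$. Then $\|T^rx\|\le p_k(x)\,k^{-r}$ for every $r\ge0$, so for $r\ge1$
\[
\|T^rx\|^{1/r}\le p_k(x)^{1/r}\,k^{-1}.
\]
If $p_k(x)=0$, then $T^rx=0$ for all $r$ and there is nothing to prove. Otherwise $p_k(x)^{1/r}\to1$, so $\limsup_r\|T^rx\|^{1/r}\le 1/k<\varepsilon$. Since $\varepsilon$ is arbitrary, the limit exists and equals $0$, which means $x\in\Hq(T)$.

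The only point needing care is reading the definition of $\Hq(T)$ correctly. It requires the limit of $\|T^nx\|^{1/n}$ to exist and vanish, and this is equivalent to $\limsup_n\|T^nx\|^{1/n}=0$ because the terms are nonnegative. With that settled, neither direction presents a real obstacle; the argument is essentially the root test run in both directions.
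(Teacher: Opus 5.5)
Your proof is correct and follows the paper's argument essentially verbatim. The forward direction uses $\|T^rx\|\le(2k)^{-r}$ for large $r$, and the converse uses $\|T^rx\|^{1/r}\le k^{-1}p_k(x)^{1/r}$ to obtain $\limsup_{r}\|T^rx\|^{1/r}\le 1/k$ for every $k$; your degenerate case $p_k(x)=0$ is exactly the paper's separate case $x=0$, since $p_k(x)\ge\|x\|$.
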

\begin{proof}
Assume that $x\in \Hq(T)$. Then, for every $k\ge1$, we have $\|T^rx\|\le(2k)^{-r}$ for sufficiently large $r$. Hence $p_k(x)<\infty$.
Conversely, for $x\ne0$ and $r\ge1$,
\[
\|T^rx\|^{1/r}\le k^{-1}p_{k}(x)^{1/r}.
\]
Since $0<\|x\|\le p_k(x)<\infty$, we get
$\limsup_{r\to\infty}\|T^rx\|^{1/r}\le1/k$ for every $k\ge1$.
Thus $x\in \Hq(T)$. The case $x=0$ is clear.
\end{proof}
\begin{proposition}\label{prop:Frechet}
Let $X$ be a complex Banach space and $T\in\B(X)$. For every integer $k\ge1$ define, on $\Hq(T)$,
\[
 p_k(x)=\sup_{r\ge0}k^r\|T^rx\|.
\]
Then $(p_k)_{k\ge1}$ is an increasing family of norms which defines a Fr\'echet topology on $\Hq(T)$. Furthermore,
\[
 \|x\|\le p_1(x),\qquad p_k(Tx)\le\frac1k p_k(x).
\]
Moreover, when $X$ is separable, this Fr\'echet space is separable.
\end{proposition}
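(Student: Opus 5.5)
By Lemma~\ref{lem:pk}, each $p_k$ is finite on $\Hq(T)$. I will first check the elementary properties. Each $p_k$ is a supremum of the seminorms $x\mapsto k^r\|T^rx\|$, so it is a seminorm. Taking $r=0$ gives $p_k(x)\ge\|x\|$, so each $p_k$ is a norm and in particular $\|x\|\le p_1(x)$. The family is increasing because $k^r\le (k+1)^r$. For the contraction estimate,
\[
p_k(Tx)=\sup_{r\ge0}k^r\|T^{r+1}x\|=\frac1k\sup_{r\ge1}k^r\|T^rx\|\le\frac1k p_k(x).
\]
Thus $(p_k)$ is a countable increasing separating family of norms. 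It therefore defines a metrizable locally convex topology, for instance through $d(x,y)=\sum_k 2^{-k}\min\{1,p_k(x-y)\}$.

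The main step is completeness. Let $(x^{(j)})$ be Cauchy for every $p_k$. Since $\|T^r x\|\le p_1(x)$ and $T^r$ is bounded, the sequence $(T^rx^{(j)})_j$ is Cauchy in $X$ for each $r$. In particular $x^{(j)}\to x$ in $X$ and $T^rx^{(j)}\to T^rx$. Fix $k$ and $\varepsilon>0$, and choose $J$ with $p_k(x^{(i)}-x^{(j)})\le\varepsilon$ for $i,j\ge J$. For each fixed $r$ we have $k^r\|T^r(x^{(i)}-x^{(j)})\|\le\varepsilon$, and letting $i\to\infty$ gives $k^r\|T^r(x-x^{(j)})\|\le\varepsilon$. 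Taking the supremum over $r$ yields $p_k(x-x^{(j)})\le\varepsilon$. Hence $p_k(x)\le p_k(x^{(J)})+\varepsilon<\infty$ for every $k$, so $x\in\Hq(T)$ by Lemma~\ref{lem:pk}, and $x^{(j)}\to x$ in the Fréchet topology. This limit argument, which exchanges the supremum over $r$ with the limit in $i$, is the only point that needs care.

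For separability, I will imitate Proposition~\ref{prop:Banach-images}. Consider the map
\[
\Phi:\Hq(T)\to\prod_{k\ge1}c_0(X),\qquad \Phi(x)=\bigl((k^rT^rx)_{r\ge0}\bigr)_{k\ge1}.
\]
Each component lies in $c_0(X)$ because $x\in\Hq(T)$: as in the proof of Lemma~\ref{lem:pk}, $\|T^rx\|\le(2k)^{-r}$ for large $r$, so $k^r\|T^rx\|\le 2^{-r}\to0$. The map $\Phi$ is linear, and the $k$-th component has $\|\cdot\|_\infty$-norm exactly $p_k(x)$. So $\Phi$ is a topological isomorphism onto its image in the countable product $\prod_k c_0(X)$, which carries the product of the sup-norm topologies. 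When $X$ is separable, $c_0(X)$ is separable, so the countable product is separable metrizable. Any subspace of a separable metrizable space is separable, and therefore $\Hq(T)$ is separable.
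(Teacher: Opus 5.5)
Your proposal is correct and follows essentially the same route as the paper: the same elementary checks, the same completeness argument (fix $r$, let one index tend to infinity, then take the supremum over $r$), and the same embedding $\Phi$ into $\prod_{k\ge1}c_0(X)$ for separability. The only cosmetic difference is that you verify $(k^rT^rx)_r\in c_0(X)$ via $\|T^rx\|\le(2k)^{-r}$ for large $r$, whereas the paper uses the bound $k^r\|T^rx\|\le 2^{-r}p_{2k}(x)$.
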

\begin{proof}
By Lemma~\ref{lem:pk}, $p_k(x)<\infty$ for every $x\in\Hq(T)$ and every $k$. The triangle inequality and homogeneity are clear. Since $p_k(x)\ge\|x\|$, $p_k$ is a norm. Moreover, $p_k\le p_{k+1}$ for every $k\ge1$.

This topology is metrizable by the distance
\[
 d(x,y)=\sum_{k=1}^\infty2^{-k}\min\{1,p_k(x-y)\}.
\]
Let us show that $\Hq(T)$ is complete. Let $(x_j)_{j\ge1}$ be a Cauchy sequence for every $p_k$. Since $\|x\|\le p_1(x)$, it is Cauchy in the Banach space $X$. Hence $x_j\to x$ in $X$ for some $x\in X$. Fix an integer $k$ and $\varepsilon>0$. Choose $J$ so that
\[
 p_k(x_j-x_l)\le\varepsilon\qquad(j,l\ge J).
\]
For each fixed $r\ge0$ and $j\ge J$, we have
\[
 k^r\|T^r(x_j-x_l)\|\le\varepsilon\qquad(l\ge J).
\]
Since $T^r$ is bounded and $x_l\to x$ in $X$, letting $l\to\infty$ gives
\[
 k^r\|T^r(x_j-x)\|\le\varepsilon\qquad(r\ge0).
\]
Taking the supremum over $r$, we obtain
$p_k(x_j-x)\le\varepsilon$. In particular $p_k(x)\le p_k(x_j)+p_k(x_j-x)<\infty$. By Lemma~\ref{lem:pk}, $x\in\Hq(T)$ and $x_j\to x$ for every $p_k$. Hence $\Hq(T)$ is a Fr\'echet space.

Finally,
\[
 p_k(Tx)=\sup_{r\ge0}k^r\|T^{r+1}x\|
 =\frac1k\sup_{s\ge1}k^s\|T^sx\|
 \le\frac1k p_k(x).
\]
Assume now that $X$ is separable. Then $c_0(X)$ is separable,
and so is the countable product $\prod_{k\geq1}c_0(X)$. Consider the map
\[
 \Phi:\Hq(T)\longrightarrow\prod_{k\geq1}c_0(X),\qquad
 \Phi(x)=\bigl((k^rT^rx)_{r\geq0}\bigr)_{k\geq1}.
\]
For every $x\in\Hq(T)$ and $k\geq1$, we have
$k^r\|T^rx\|\leq 2^{-r}p_{2k}(x)\to0$, so $\Phi$ is well defined.
Moreover, $\Phi$ is linear, and, denoting by $\pi_k$ the projection
onto the $k$-th factor, we have
\[
 \|\pi_k(\Phi(x))\|_\infty=p_k(x)
 \qquad(x\in\Hq(T),\ k\geq1).
\]
Thus $\Phi$ is continuous and is a topological embedding. Since every
subspace of a separable metrizable space is separable, we conclude
that $\Hq(T)$ is separable.
\end{proof}
\begin{theorem}\label{thm:finitecodim}
Let $X$ be a complex Banach space and $T\in\B(X)$. If
\[
 \codim\big(\Ran(T)+\Hq(T)\big)<\infty,
\]
then $\Ran(T)+\Hq(T)$ is closed in $X$.
\end{theorem}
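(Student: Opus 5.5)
Following the standard scheme for operator ranges, I will show that $\Ran(T)+\Hq(T)$ is the image of a Fréchet space under a continuous linear map, and then apply the Fréchet version of the classical fact that an operator range of finite codimension is closed. Equip $\Hq(T)$ with the Fréchet topology of Proposition~\ref{prop:Frechet}, and consider
\[
 \Psi: X\times \Hq(T)\longrightarrow X,\qquad \Psi(x,h)=Tx+h .
\]
The space $X\times\Hq(T)$ is Fréchet, as a product of a Banach space and a Fréchet space. Since $\|h\|\le p_1(h)$, the inclusion $\Hq(T)\hookrightarrow X$ is continuous, so $\Psi$ is linear and continuous. Its range is exactly $\Ran(T)+\Hq(T)$.

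Now suppose $\codim(\Ran(T)+\Hq(T))=d<\infty$. Choose vectors $z_1,\dots,z_d\in X$ whose classes form a basis of $X/(\Ran(T)+\Hq(T))$, and set $F=\operatorname{span}\{z_1,\dots,z_d\}$. Define
\[
 \widetilde\Psi: X\times\Hq(T)\times\mathbb C^d\to X,\qquad \widetilde\Psi(x,h,\lambda)=Tx+h+\sum_i\lambda_i z_i .
\]
This map is continuous, linear and surjective, and its domain is a Fréchet space. By the open mapping theorem for Fréchet spaces, $\widetilde\Psi$ is open. Its kernel $N$ is closed, so the induced map $\widehat\Psi:(X\times\Hq(T)\times\mathbb C^d)/N\to X$ is a topological isomorphism.

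Let $W=X\times\Hq(T)\times\{0\}$. The quotient image of $W$ has finite codimension in the Fréchet quotient, and I want to show it is closed there. The preimage of $\Ran(T)+\Hq(T)$ under $\widetilde\Psi$ is $W+N$. I claim this equals
\[
 W+N=\{(x,h,\lambda):\lambda=0\}+N=\{(x,h,\lambda):\lambda=0\},
\]
which requires $N\subset W$. Indeed, if $Tx+h+\sum\lambda_i z_i=0$, then $\sum\lambda_iz_i\in\Ran(T)+\Hq(T)$. Since the classes of the $z_i$ are independent modulo $\Ran(T)+\Hq(T)$, this forces $\lambda=0$. So the preimage is the closed subspace $\{\lambda=0\}$. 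Because $\widetilde\Psi$ is an open surjection, it is a quotient map, so a set whose preimage is closed is itself closed. Hence $X\setminus(\Ran(T)+\Hq(T))$ has an open preimage, and its image, which is that set itself by surjectivity, is open. Therefore $\Ran(T)+\Hq(T)$ is closed.

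The main obstacle is justifying the open mapping theorem in the Fréchet setting. This is standard (Banach–Schauder holds for complete metrizable topological vector spaces), and the completeness needed was established in Proposition~\ref{prop:Frechet}. The only other points to check are that the product of a Banach space, a Fréchet space and $\mathbb C^d$ is again Fréchet, which is routine, and that the linear independence of the classes of the $z_i$ modulo $\Ran(T)+\Hq(T)$ is used exactly as above.
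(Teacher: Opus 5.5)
Your proposal is correct and follows essentially the same route as the paper: you adjoin a finite-dimensional algebraic complement to get a continuous linear surjection $X\times\Hq(T)\times\mathbb C^d\to X$ from a Fréchet space, apply the open mapping theorem, and observe that the preimage of $\Ran(T)+\Hq(T)$ is the closed subspace $\{\lambda=0\}$, so the quotient-map property of open surjections gives closedness. The detour through the kernel $N$ and the induced isomorphism is unnecessary, since you verify directly that the preimage is $\{\lambda=0\}$, but it does no harm.
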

\begin{proof}
Put $L=\Ran(T)+\Hq(T)$. Since $\codim L<\infty$, there exists a finite-dimensional subspace $F$ of $X$ such that
\[
 X=L\oplus F\quad\text{algebraically}.
\]
Endow $\Hq(T)$ with the Fr\'echet topology of Proposition~\ref{prop:Frechet}. Since $F$ is finite-dimensional,
\[
 E=X\times\Hq(T)\times F
\]
is a Fr\'echet space. Define
\[
 \Psi:E\longrightarrow X,\qquad \Psi(x,h,f)=Tx+h+f.
\]
Clearly, $\Psi$ is linear and surjective. Moreover, $\Psi$ is continuous.
By the open mapping theorem \resultcite{Theorem~2.11}{Rudin}, $\Psi$ is open.
Moreover,
\[
 \Psi^{-1}(L)=X\times\Hq(T)\times\{0\}
\]
is closed in $E$. Since $\Psi$ is an open surjection, it is a quotient map. Therefore
$L$ is closed if and only if $\Psi^{-1}(L)$ is closed.
Hence $L$ is closed in $X$.
\end{proof}

\begin{lemma}\label{lem:preimage}
Put $L=\K(T)+\Hq(T)$. For every $\lambda\in\mathbb C$,
\[
 (T-\lambda I)^{-1}(L)=L,
\]
where the left-hand side denotes the preimage of $L$.
\end{lemma}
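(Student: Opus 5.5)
The inclusion $L\subseteq (T-\lambda I)^{-1}(L)$ is immediate: $\K(T)$ and $\Hq(T)$ are $T$-invariant linear subspaces, so $L$ is invariant under $T$ and hence under $T-\lambda I$. For the reverse inclusion I take $x$ with $(T-\lambda I)x=k+h$, where $k\in\K(T)$ and $h\in\Hq(T)$. The plan is to solve $(T-\lambda I)u=h$ with $u\in\Hq(T)$ and $(T-\lambda I)v=k$ with $v\in\K(T)$. Then $x-u-v\in\Nul(T-\lambda I)$, and it remains to place this kernel vector in $L$. I treat $\lambda=0$ and $\lambda\neq0$ separately.

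\emph{Case $\lambda=0$.} Since $T\K(T)=\K(T)$, write $k=Tk'$ with $k'\in\K(T)$. Then $T(x-k')=h\in\Hq(T)$. For $n\geq1$ this gives $\|T^n(x-k')\|^{1/n}=\|T^{n-1}h\|^{1/n}$, which tends to $0$ because $\|T^{n-1}h\|^{1/(n-1)}\to0$. Hence $x-k'\in\Hq(T)$, and so $x=k'+(x-k')\in L$.

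\emph{Case $\lambda\neq0$, quasinilpotent part.} Put $u=-\sum_{n\geq0}\lambda^{-n-1}T^nh$. I will show that this series converges in the Fréchet topology of Proposition~\ref{prop:Frechet}. Fix $k_0$ and choose an integer $k\geq k_0$ with $k>2/|\lambda|$. By Proposition~\ref{prop:Frechet}, $p_{k_0}(T^nh)\le p_k(T^nh)\le k^{-n}p_k(h)$, so the series is absolutely summable for each $p_{k_0}$. Completeness of $\Hq(T)$ then gives $u\in\Hq(T)$. Since the topology is finer than the norm topology, the series also converges in $X$, and telescoping gives $(T-\lambda I)u=h$.

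\emph{Case $\lambda\neq0$, analytic core.} Here I will show directly that any $y$ with $(T-\lambda I)y=k$ lies in $\K(T)$; this subsumes solving for $v$ and also handles the kernel term. Take the backward chain $(k_n)$ of $k$ from \eqref{eq:K}, with $Tk_{n+1}=k_n$ and $\|k_n\|\le c^n\|k\|$. Set $y_0=y$ and $y_n=\lambda^{-1}(y_{n-1}-k_n)$, and argue by induction that $(T-\lambda I)y_n=k_n$ and $Ty_n=y_{n-1}$. Indeed, $(T-\lambda I)y_{n-1}=k_{n-1}=Tk_n$ gives $T(y_{n-1}-k_n)=\lambda y_{n-1}$, which is the second relation. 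Then $(T-\lambda I)y_n=\lambda^{-1}(k_{n-1}-Tk_n+\lambda k_n)=k_n$, which is the first. Unwinding the recursion,
\[
y_n=\lambda^{-n}y-\sum_{j=1}^n\lambda^{-(n-j+1)}k_j ,
\]
so $\|y_n\|\le C^n(\|y\|+\|k\|)$ for a suitable constant $C$. If $y\neq0$, this is bounded by $c'^n\|y\|$ after enlarging the constant, using $\|k\|\le(\|T\|+|\lambda|)\|y\|$; so $y\in\K(T)$. Applying this to $y=x-u$, where $(T-\lambda I)(x-u)=k$, gives $x\in\Hq(T)+\K(T)=L$.

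The main obstacle is the analytic-core step for $\lambda\neq0$, because a naive power series in $\lambda$ converges only for $|\lambda|<1/c$. The recursive construction of the backward chain, which exploits $T\K(T)=\K(T)$ at every stage, avoids this restriction. The only points needing care are the geometric norm estimate and the trivial case $y=0$.
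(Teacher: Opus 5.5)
Your proof is correct and follows the paper's argument essentially step by step: the same split at $\lambda=0$ using $T\K(T)=\K(T)$, the same series $u=-\sum_{n\ge0}\lambda^{-n-1}T^nh\in\Hq(T)$, and the same backward chain $y_n=\lambda^{-n}y-\sum_{j=1}^n\lambda^{-(n-j+1)}k_j$ for $y=x-u$. The differences are cosmetic: you get $u\in\Hq(T)$ from absolute convergence in the Fr\'echet topology of Proposition~\ref{prop:Frechet} instead of the paper's direct bound on $r^j\|T^ju\|$, and you write out the inductive check of $Ty_n=y_{n-1}$ and the growth estimate, which the paper leaves as a direct calculation.
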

\begin{proof}
The inclusion $L\subseteq (T-\lambda I)^{-1}(L)$ follows from the invariance of $\K(T)$ and $\Hq(T)$. Conversely, suppose
\[
 (T-\lambda I)x=k+h,\qquad k\in\K(T),\ h\in\Hq(T).
\]
If $\lambda=0$, choose $k_1\in\K(T)$ with $Tk_1=k$. Then $T(x-k_1)=h$, and $T^{-1}\Hq(T)=\Hq(T)$ gives $x\in L$.

Assume $\lambda\ne0$. First, there is $u\in\Hq(T)$ such that $(T-\lambda I)u=h$. Indeed, put
\[
 u=-\sum_{n=0}^{\infty}\lambda^{-n-1}T^nh.
\]
The series converges in $X$ since $h\in\Hq(T)$. Moreover, for every $r>0$ choose an integer $s>\max\{r,|\lambda|^{-1}\}$. Writing $p_s(h)=\sup_{j\ge0}s^j\|T^jh\|$, we obtain, for $j\ge0$,
\[
 r^j\|T^ju\|\le \frac{p_s(h)}{|\lambda|}
 \Big(\frac rs\Big)^j
 \sum_{n=0}^{\infty}(|\lambda|s)^{-n}<\infty.
\]
Thus $u\in\Hq(T)$. Set $z=x-u$. Then $(T-\lambda I)z=k$. Choose an exponentially bounded backward chain $k_0=k$, $Tk_{n+1}=k_n$. Define $z_0=z$ and
\[
 z_n=\lambda^{-n}z-\sum_{j=1}^n\lambda^{-(n-j+1)}k_j\qquad(n\ge1).
\]
A direct calculation gives $Tz_n=z_{n-1}$; the bound on the $k_j$ shows that $(z_n)$ has at most exponential growth. Hence $z\in\K(T)$ and $x=z+u\in L$.
\end{proof}
\begin{lemma}\label{lemma:corecodim}
Let $X$ be a complex Banach space and let $T\in\B(X)$. Then:
\begin{enumerate}
\item $\dim\bigl(\K(T)\cap \Hq(T)\bigr)<\infty$ if and only if
$\K(T)\cap \Hq(T)=\{0\}$.
\item $\codim\bigl(\K(T)+\Hq(T)\bigr)<\infty$ if and only if
$\K(T)+\Hq(T)=X$.
\end{enumerate}
\end{lemma}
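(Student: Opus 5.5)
For (i) the plan is to show that $V=\K(T)\cap\Hq(T)$ satisfies $TV=V$ and that $T_V$ is nilpotent. When $V$ is finite-dimensional, these two facts together force $V=\{0\}$. The reverse implication is trivial.

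\emph{Surjectivity of $T_V$.} Let $v\in V$. Since $T\K(T)=\K(T)$, there is $k_1\in\K(T)$ with $Tk_1=v$. Then $Tk_1\in\Hq(T)$. The identity $T^{-1}\Hq(T)=\Hq(T)$, already used in the proof of Lemma~\ref{lem:preimage}, gives $k_1\in\Hq(T)$. It holds because $\|T^nk_1\|^{1/n}=\bigl(\|T^{n-1}v\|^{1/(n-1)}\bigr)^{(n-1)/n}\to0$. Hence $k_1\in V$, and $T_V$ maps $V$ onto $V$.

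\emph{Nilpotence of $T_V$.} If $\dim V<\infty$, then $T_V$ is an operator on a finite-dimensional space. If $T_Vx=\mu x$ with $x\neq0$, then $\|T^nx\|^{1/n}=|\mu|$. Since $x\in\Hq(T)$, this limit is $0$, so $\mu=0$. Thus $\sigma(T_V)=\{0\}$ and $T_V$ is nilpotent.

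\emph{Conclusion of (i).} A surjective nilpotent operator on a finite-dimensional space $V$ satisfies $V=T_V^N V=\{0\}$ for large $N$.

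For (ii) the implication $\Leftarrow$ is again trivial. For $\Rightarrow$, put $L=\K(T)+\Hq(T)$ and work purely algebraically in the finite-dimensional quotient $Q=X/L$. No closedness of $L$ is needed, since only linear algebra is used.

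\emph{The induced operator.} Since $L$ is $T$-invariant, $T$ induces a linear map $\hat T:Q\to Q$ given by $\hat T(x+L)=Tx+L$.

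\emph{Injectivity of $\hat T-\lambda$.} By Lemma~\ref{lem:preimage}, $(T-\lambda I)^{-1}(L)=L$ for every $\lambda\in\mathbb C$. This says exactly that if $(T-\lambda I)x\in L$ then $x\in L$. In other words, $\hat T-\lambda$ is injective on $Q$ for every complex $\lambda$.

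\emph{Conclusion of (ii).} If $Q\neq\{0\}$, then, being a nonzero finite-dimensional complex vector space, $Q$ carries an eigenvector of $\hat T$. This contradicts the injectivity just established. Hence $Q=\{0\}$, i.e.\ $L=X$.

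The only real input is Lemma~\ref{lem:preimage}, which has already been proved. The delicate point in (i) is the pullback of quasinilpotence through $T$, which the norm computation above settles. I therefore expect no serious obstacle beyond checking these two invariance facts.
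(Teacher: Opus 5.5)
Your proposal is correct and follows the paper's proof essentially verbatim: in (i) you show $T(\K(T)\cap\Hq(T))=\K(T)\cap\Hq(T)$ via $T\K(T)=\K(T)$ and $T^{-1}\Hq(T)=\Hq(T)$, then exclude a nonzero finite-dimensional such space by an eigenvalue argument, and in (ii) you apply Lemma~\ref{lem:preimage} to an eigenvector of the induced map on the finite-dimensional quotient $X/L$. The only differences are cosmetic: you phrase (i) through nilpotence of $T_V$ rather than through a nonzero eigenvalue of a bijection, and you verify $T^{-1}\Hq(T)=\Hq(T)$ explicitly (small slip: $\|T^nx\|^{1/n}=|\mu|\,\|x\|^{1/n}$, which only tends to $|\mu|$, but that is all you need).
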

\begin{proof}
\begin{enumerate}
\item Put $E=\K(T)\cap \Hq(T)$. We first show that $T(E)=E$.
For $y\in E$, choose $x\in \K(T)$ such that $Tx=y$, using
$T(\K(T))=\K(T)$. As $y\in \Hq(T)$ and
$T^{-1}(\Hq(T))=\Hq(T)$, we obtain $x\in E$. Hence $T(E)=E$.
If $E$ were nonzero and finite-dimensional, the restriction $T_E$
would be bijective and would have a nonzero eigenvalue. This
contradicts $E\subseteq \Hq(T)$. Thus $\dim E$ is either zero or
infinite.

\item Put $L=\K(T)+\Hq(T)$ and assume $\codim L<\infty$. Since
$L$ is $T$-invariant, the formula
\[
 \widetilde T(x+L)=Tx+L
\]
defines an algebraic linear map on $V=X/L$.
Suppose $V\ne\{0\}$. Since $V$ is a finite-dimensional complex
vector space, there are an eigenvalue $\lambda_0$ of $\widetilde T$
and a nonzero vector $v=x+L$ satisfying
\[
 (\widetilde T-\lambda_0 I)v=0.
\]
Consequently, $x\notin L$ and $(T-\lambda_0 I)x\in L$.
By Lemma~\ref{lem:preimage}, we obtain $x\in L$, a contradiction.
Hence $V=\{0\}$ and $L=X$.
\end{enumerate}
\end{proof}

\section{The canonical algebraic decomposition}
\begin{theorem}\label{thm:main}
Let $X$ be a complex Banach space and $T\in\B(X)$. The following assertions are equivalent:
\begin{enumerate}
\item $T$ is generalized Drazin invertible;
\item $X=\K(T)\oplus\Hq(T)$;
\item $X=\K(T)\oplus\Hq(T)$ is a topological direct sum.
\end{enumerate}
In particular, the algebraic identity in \emph{(ii)} implies that $\K(T)$ and $\Hq(T)$ are closed and that the corresponding algebraic projection is bounded.
\end{theorem}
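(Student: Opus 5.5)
The plan is to prove (iii)$\Rightarrow$(ii) trivially, (i)$\Rightarrow$(iii) from the classical theory, and then put the real work into (ii)$\Rightarrow$(iii).

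\emph{(i)$\Rightarrow$(iii).} Suppose $X=M\oplus N$ is topological, with $M,N$ closed and invariant, $T_M$ invertible and $T_N$ quasinilpotent. We verify directly that $\K(T)=M$ and $\Hq(T)=N$.
\begin{itemize}
\item $M\subseteq\K(T)$: take backward orbits $x_n=T_M^{-n}x$, which satisfy $\|x_n\|\le\|T_M^{-1}\|^n\|x\|$.
\item $N\subseteq\Hq(T)$: this holds because $T_N$ is quasinilpotent.
\item $\K(T)\subseteq M$: let $P$ be the bounded projection onto $N$ along $M$. If $x\in\K(T)$ with backward chain $(x_n)$, then $Px\in\K(T_N)$, since $(Px_n)$ is a backward chain with exponential bound. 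But $\K(T_N)=\{0\}$: we have $\|Px\|=\|T_N^nPx_n\|\le\|T_N^n\|c^n\|x\|\|P\|$, and $\|T_N^n\|^{1/n}\to0$.
\item $\Hq(T)\subseteq N$: if $x\in\Hq(T)$, then $(I-P)x\in\Hq(T_M)$. Since $T_M$ is invertible, $\|y\|\le\|T_M^{-1}\|^n\|T^ny\|$, so $\Hq(T_M)=\{0\}$.
\end{itemize}

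\emph{(ii)$\Rightarrow$(iii) and (i).} Assume $X=\K(T)\oplus\Hq(T)$ algebraically. The strategy is to show that one summand is closed and then invoke Daji\'c--Koliha \resultcite{Theorem~6.7}{DajicKoliha}. I would prefer, though, to get both summands closed and the projection bounded directly via an open mapping argument, in the spirit of Theorem~\ref{thm:finitecodim}. The construction is as follows.
\begin{itemize}
\item Use Proposition~\ref{prop:Banach-images}: $\K(T)=\bigcup_m\K_m$ with $\K_m=E_m(Y_m)$, where $Y_m$ is Banach.
\item Use Proposition~\ref{prop:Frechet}: $\Hq(T)$ carries a Fr\'echet topology finer than the norm.
\item Consider the continuous linear maps $\Psi_m:Y_m\times\Hq(T)\to X$, $(y,h)\mapsto y_0+h$.
\item Their ranges $\K_m+\Hq(T)$ increase with $m$, because $Y_m\hookrightarrow Y_{m+1}$ through a rescaling $y_n\mapsto (m/(m+1))^n y_n$ that preserves $y_0$.
\item Their union is $X$. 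By Baire, some range $\K_m+\Hq(T)$ is nonmeagre in $X$.
\item The open mapping theorem for Fr\'echet spaces (a continuous linear map between Fr\'echet spaces whose range is of second category is surjective and open) then gives $\Psi_m$ surjective and open.
\end{itemize}

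Consequently $\K(T)=\K_m$. The kernel of $\Psi_m$ is $\{(y,-y_0):y_0\in\K(T)\cap\Hq(T)=\{0\}\}=\{(y,0):y_0=0\}$, which is closed. Hence the factored map $(Y_m/\ker E_m)\times\Hq(T)\to X$ is a topological isomorphism.

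The projection $Q:X\to\Hq(T)$ along $\K(T)$ is therefore continuous from $X$ into the Fr\'echet space $\Hq(T)$, and a fortiori into $X$, because $\|h\|\le p_1(h)$. So $Q$ is a bounded idempotent on $X$, and its range $\Hq(T)$ and kernel $\K(T)$ are closed. This gives (iii). Then (i) follows either from the cited closed-summand theorems, or directly by taking $M=\K(T)$ and $N=\Hq(T)$, as below.

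To get (i) directly from (iii), two facts are needed.
\begin{itemize}
\item $T_{\K(T)}$ is invertible. It is surjective because $T\K(T)=\K(T)$. It is injective because its kernel lies in $\K(T)\cap\Hq(T)=\{0\}$, since kernel vectors are in $\Hq(T)$. By the open mapping theorem on the closed subspace, it is invertible.
\item $T_{\Hq(T)}$ is quasinilpotent. Since $\Hq(T)$ is closed, it is a Banach space on which every vector is pointwise quasinilpotent. The spectral radius satisfies $r(T_N)=\lim\|T_N^n\|^{1/n}$. Pointwise quasinilpotence gives $\sup_n\|(2k)^nT^nx\|<\infty$ for each $x$, hence by uniform boundedness $\|T_N^n\|\le C(2k)^{-n}$. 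So $r(T_N)=0$.
\end{itemize}

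The main obstacle is the Baire/open mapping step: justifying that a nonmeagre range forces surjectivity and openness for maps from Fr\'echet spaces (Rudin's Theorem~2.11 covers exactly this), and checking that $\K_m$ increases in $m$ so that one index suffices.
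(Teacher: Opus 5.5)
Your proposal is correct and follows essentially the paper's proof of the key implication (ii)$\Rightarrow$(iii). It uses the same maps $Y_m\times\Hq(T)\to X$, the same Baire argument, and Rudin's nonmeagre-range open mapping theorem, and it reads off continuity of the projection from openness: you do this via the quotient isomorphism, the paper via $P^{-1}(U)=\Phi_{m_0}(\{(y,h):h\in U\})$. The only differences are that you verify (i)$\Leftrightarrow$(iii) by hand instead of citing Daji\'c--Koliha, and that your monotonicity check on $\K_m$ is unnecessary, since a single nonmeagre range already forces surjectivity.
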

\begin{proof}
The equivalence of (i) and (iii) follows from \resultcite{Theorem~6.7}{DajicKoliha}. We only prove (ii)$\Rightarrow$(iii).  By Proposition~\ref{prop:Frechet}, $\mathcal H_0(T)$ is a Fr\'echet space
continuously embedded in $X$. We use the Banach spaces $Y_m$ and the
maps $E_m$ defined in \eqref{eq:Ym}--\eqref{eq:Em}; by
Proposition~\ref{prop:Banach-images},
\[
 \K(T)=\bigcup_{m\geq1}E_m(Y_m).
\]

Now $F_m=Y_m\times \mathcal H_0(T)$ is Fr\'echet, and
\[
 \Phi_m:F_m\longrightarrow X,\qquad \Phi_m(y,h)=E_m(y)+h
\]
is linear and continuous, since $E_m$ is bounded and $\|h\|\leq p_1(h)$. By Proposition~\ref{prop:Banach-images} and the hypothesis,
\[
 X=\mathcal K(T)+\mathcal H_0(T)=\bigcup_{m\geq1}\Ran(\Phi_m).
\]
Since $X$ is a Baire space, some $\Ran(\Phi_{m_0})$ is nonmeagre. The nonmeagre-range form of the open mapping theorem \resultcite{Theorem~2.11}{Rudin} implies that $\Phi_{m_0}$ is onto and open.

Let $P:X\to X$ be the algebraic projection onto $\Hq(T)$ along
$\K(T)$, defined by $P(k+h)=h$ for every $k\in\K(T)$ and
$h\in\Hq(T)$. For every $(y,h)\in F_{m_0}$, we have
\[
 P\Phi_{m_0}(y,h)=h.
\]
If $U$ is open in $X$, then
\[
 P^{-1}(U)=\Phi_{m_0}\bigl(\{(y,h)\in F_{m_0}:h\in U\}\bigr).
\]
The set inside the parentheses is open, since $\Hq(T)$ is continuously
embedded in $X$, and $\Phi_{m_0}$ is open. Thus $P$ is continuous.
As $P^2=P$,
\[
 \K(T)=\Nul(P),\qquad \Hq(T)=\Nul(I-P)
\]
are closed, as claimed.
\end{proof}

\begin{corollary}\label{cor:category}
For every $T\in\B(X)$, either $\K(T)+\Hq(T)=X$ or $\K(T)+\Hq(T)$ is meagre in $X$. Consequently, $T$ is generalized Drazin invertible if and only if
\[
 \K(T)\cap\Hq(T)=\{0\},\qquad
 \K(T)+\Hq(T)\text{ is nonmeagre in }X.
\]
\end{corollary}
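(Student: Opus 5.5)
The dichotomy follows by repeating the Baire-category argument from the proof of Theorem~\ref{thm:main} without assuming the sum is all of $X$. Put $L=\K(T)+\Hq(T)$. By Proposition~\ref{prop:Banach-images},
\[
 L=\bigcup_{m\geq1}\Ran(\Phi_m),
\]
where $\Phi_m:Y_m\times\Hq(T)\to X$, $\Phi_m(y,h)=E_m(y)+h$, is a continuous linear map from a Fr\'echet space. Propositions~\ref{prop:Banach-images} and~\ref{prop:Frechet} give that $Y_m$ is Banach and $\Hq(T)$ is Fr\'echet, so $Y_m\times\Hq(T)$ is Fr\'echet.

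Suppose $L$ is not meagre in $X$. Since $L$ is a countable union of the ranges $\Ran(\Phi_m)$, and a countable union of meagre sets is meagre, some $\Ran(\Phi_{m_0})$ is nonmeagre. The nonmeagre-range form of the open mapping theorem \resultcite{Theorem~2.11}{Rudin} then makes $\Phi_{m_0}$ surjective. Hence $X=\Ran(\Phi_{m_0})\subseteq L$, so $L=X$. Otherwise $L$ is meagre, which gives the dichotomy. There is no real obstacle here: the only point to check is that the Rudin theorem applies with a Fr\'echet domain and a Banach target, and this was already used in Theorem~\ref{thm:main}.

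For the characterization, first assume $T$ is generalized Drazin invertible. Theorem~\ref{thm:main} gives $X=\K(T)\oplus\Hq(T)$, so the intersection is $\{0\}$. The sum equals $X$, which is nonmeagre in itself by the Baire category theorem.

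Conversely, suppose the intersection is $\{0\}$ and $L$ is nonmeagre. The dichotomy forces $L=X$. Then $X=\K(T)\oplus\Hq(T)$ algebraically, and (ii)$\Rightarrow$(i) of Theorem~\ref{thm:main} yields generalized Drazin invertibility.
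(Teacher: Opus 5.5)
Your proposal is correct and matches the paper's proof. You write $\K(T)+\Hq(T)$ as the countable union of the ranges of the maps $\Phi_m$ from the proof of Theorem~\ref{thm:main}, and use the nonmeagre-range open mapping theorem to force one of them to be onto; the characterization then follows from Theorem~\ref{thm:main}, with the Baire category theorem giving nonmeagreness of $X$ in the forward direction.
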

\begin{proof}
If the sum is nonmeagre, the union of the ranges of the maps $\Phi_m$ in the preceding proof is nonmeagre. Hence one of those maps is onto by \resultcite{Theorem~2.11}{Rudin}, and the sum equals $X$. The last assertion follows from Theorem~\ref{thm:main}.
\end{proof}

\begin{proposition}\label{thm:defects}
Let $X$ be a complex Banach space and $T\in\B(X)$. The following assertions are equivalent:
\begin{enumerate}
\item $T$ is generalized Drazin invertible;
\item $\dim(\K(T)\cap\Hq(T))<\infty$ and
$\codim(\K(T)+\Hq(T))<\infty$.
\end{enumerate}
\end{proposition}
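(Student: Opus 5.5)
The plan is to reduce the statement to Theorem~\ref{thm:main} using Lemma~\ref{lemma:corecodim}. Both parts of that lemma turn the finite-defect conditions in (ii) into the exact algebraic decomposition appearing in Theorem~\ref{thm:main}(ii).

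For (i)$\Rightarrow$(ii), I would argue directly. If $T$ is generalized Drazin invertible, then Theorem~\ref{thm:main} gives $X=\K(T)\oplus\Hq(T)$. Hence $\K(T)\cap\Hq(T)=\{0\}$, which has dimension $0$, and $\K(T)+\Hq(T)=X$, which has codimension $0$. Both finiteness conditions in (ii) therefore hold trivially.

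For (ii)$\Rightarrow$(i), I would apply Lemma~\ref{lemma:corecodim} to each hypothesis.
\begin{enumerate}
\item By part (i) of the lemma, $\dim(\K(T)\cap\Hq(T))<\infty$ forces $\K(T)\cap\Hq(T)=\{0\}$. The reason is that $E=\K(T)\cap\Hq(T)$ satisfies $T(E)=E$, so a nonzero finite-dimensional $E$ would carry a bijective restriction with a nonzero eigenvalue. That is impossible inside $\Hq(T)$.
\item By part (ii) of the lemma, $\codim(\K(T)+\Hq(T))<\infty$ forces $\K(T)+\Hq(T)=X$. This uses Lemma~\ref{lem:preimage}: an eigenvector of the induced map on the finite-dimensional quotient $X/L$ would produce some $x\notin L$ with $(T-\lambda_0 I)x\in L$. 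This contradicts $(T-\lambda_0I)^{-1}(L)=L$.
\end{enumerate}
Together these give the algebraic direct sum $X=\K(T)\oplus\Hq(T)$. Theorem~\ref{thm:main}, (ii)$\Rightarrow$(i), then yields generalized Drazin invertibility.

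I do not expect any serious obstacle, because all the substantive work has already been done. The difficult points are the preimage identity of Lemma~\ref{lem:preimage} and the Baire-category and open-mapping argument in Theorem~\ref{thm:main}, which shows that the purely algebraic decomposition is automatically topological. The only point to check is that the hypotheses of Lemma~\ref{lemma:corecodim} are exactly those stated in (ii), with no closedness assumed. This holds, since the quotient $X/L$ in part (ii) of the lemma is taken purely algebraically.
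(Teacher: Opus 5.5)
Your proposal is correct and is essentially the paper's proof. In both, Lemma~\ref{lemma:corecodim} upgrades the finite defects in (ii) to $\K(T)\cap\Hq(T)=\{0\}$ and $\K(T)+\Hq(T)=X$, so Theorem~\ref{thm:main} yields (i), and the converse comes from the canonical decomposition $X=\K(T)\oplus\Hq(T)$ of a generalized Drazin invertible operator.
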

\begin{proof}
If (ii) holds, Lemma~\ref{lemma:corecodim} gives $\K(T)\cap\Hq(T)=\{0\}$ and
$\K(T)+\Hq(T)=X$. Hence $X=\K(T)\oplus\Hq(T)$, and
Theorem~\ref{thm:main} yields (i). The converse follows from the
canonical topological decomposition.
\end{proof}

\section{Consequences for Drazin invertibility}
The preceding results yield the following characterization without any new assumption of closedness.

\begin{corollary}\label{cor:drazin-from-gdrazin}
For $T\in\B(X)$ the following assertions are equivalent:
\begin{enumerate}
\item $T$ is Drazin invertible;
\item $X=\K(T)\oplus\Hq(T)$ and $\Hq(T)=\Nul^\infty(T)$;
\item $\dim(\K(T)\cap\Hq(T))<\infty$,
$\codim(\K(T)+\Hq(T))<\infty$, and
$\Hq(T)=\Nul^\infty(T)$.
\end{enumerate}
In this case $\Hq(T)=\Nul(T^d)$ and
$\K(T)=\Ran(T^d)$ for all sufficiently large $d$.
\end{corollary}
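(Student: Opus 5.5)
The equivalence (ii)$\Leftrightarrow$(iii) is immediate. By Lemma~\ref{lemma:corecodim} the two finite-defect conditions in (iii) are equivalent to $\K(T)\cap\Hq(T)=\{0\}$ and $\K(T)+\Hq(T)=X$, which together say that $X=\K(T)\oplus\Hq(T)$. The extra condition $\Hq(T)=\Nul^\infty(T)$ appears in both and is carried along unchanged. The real content is therefore (i)$\Leftrightarrow$(ii), together with the identification of the summands.

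For (ii)$\Rightarrow$(i), Theorem~\ref{thm:main} shows that both summands are closed and that $T$ is generalized Drazin invertible. Put $M=\K(T)$ and $N=\Hq(T)$. Since $T(\K(T))=\K(T)$, the restriction $T_M$ is surjective. It is also injective: if $x\in M$ and $Tx=0$, then $x\in\Nul(T)\subseteq\Hq(T)$, so $x\in M\cap N=\{0\}$. As $M$ is closed, $T_M$ is invertible by the open mapping theorem.

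It remains to see that $T_N$ is nilpotent, and this is the main obstacle. We have $N=\bigcup_{n\ge1}\Nul(T^n)$, and $N$ is closed, hence a Banach space. It is the increasing union of the closed subspaces $\Nul(T^n)\cap N$. By the Baire category theorem one of them, say $\Nul(T^d)$, has nonempty interior in $N$. A subspace with nonempty interior is the whole space, so $N=\Nul(T^d)$ and $T_N^d=0$. The decomposition $X=M\oplus N$ into closed $T$-invariant subspaces then shows that $T$ is Drazin invertible. Moreover $\Nul(T^{d'})=N$ for every $d'\ge d$.

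For (i)$\Rightarrow$(ii), take the Drazin decomposition $X=M\oplus N$ with $T_M$ invertible and $T_N^d=0$. Then $T$ is generalized Drazin invertible, so Theorem~\ref{thm:main} gives $X=\K(T)\oplus\Hq(T)$. To check $\Hq(T)=\Nul^\infty(T)$ we identify the summands directly.

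\begin{itemize}
\item $N\subseteq\Nul^\infty(T)\subseteq\Hq(T)$ holds trivially.
\item $M\subseteq\K(T)$: for $x\in M$, the chain $x_n=T_M^{-n}x$ satisfies $Tx_{n+1}=x_n$ and $\|x_n\|\le\|T_M^{-1}\|^n\|x\|$.
\item $\Hq(T)\subseteq N$: write $x\in\Hq(T)$ as $x=m+n$ with $m\in M$, $n\in N$. For $r\ge d$ we have $T^r x=T^r m$. Hence $\|m\|\le\|T_M^{-1}\|^r\|T^rx\|$, so $\|m\|^{1/r}\le\|T_M^{-1}\|\,\|T^rx\|^{1/r}\to0$, and therefore $m=0$.
\end{itemize}

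Thus $\Hq(T)=N=\Nul^\infty(T)$. Since $M\subseteq\K(T)$ and both $M$ and $\K(T)$ are complements of the same subspace $N$, we get $\K(T)=M$.

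For the final claim, take $d'\ge d$ in either direction. Then $T^{d'}$ annihilates $N$ and maps $M$ onto itself, so
\[
\Ran(T^{d'})=T^{d'}(M)=M=\K(T),\qquad \Nul(T^{d'})=N=\Hq(T).
\]
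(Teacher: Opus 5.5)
Your proposal is correct and takes essentially the paper's route: (ii)$\Leftrightarrow$(iii) via Lemma~\ref{lemma:corecodim}, Theorem~\ref{thm:main} to make $\Hq(T)$ closed, and the Baire theorem on $\Hq(T)=\bigcup_{n\geq1}\Nul(T^n)$ to obtain nilpotency. The difference is that you spell out what the paper leaves implicit, namely the invertibility of $T_{\K(T)}$ and, for (i)$\Rightarrow$(ii), the identification $\K(T)=M$, $\Hq(T)=N=\Nul^\infty(T)$, which the paper attributes to ``the usual Drazin decomposition''.
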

\begin{proof}
By Theorem~\ref{thm:main}, (ii) implies that $T$ is generalized
Drazin invertible and $\Hq(T)$ is closed. Since
$\Hq(T)=\Nul^\infty(T)=\bigcup_{n\geq1}\Nul(T^n)$,
the Baire theorem, applied to the closed subspaces
$\Nul(T^n)$ of the Banach space $\Hq(T)$, gives
$\Hq(T)=\Nul(T^d)$ for some $d$. Thus the quasinilpotent
restriction is nilpotent, and $T$ is Drazin invertible.
The converse follows from the usual Drazin decomposition;
the equivalence of (ii) and (iii) follows at once from
Proposition~\ref{thm:defects}.
\end{proof}

The hyperkernel can also replace the quasinilpotent part
in the canonical algebraic decomposition.
\begin{corollary}\label{cor:drazin-core}
An operator $T\in\B(X)$ is Drazin invertible if and only if
\[
X=\K(T)\oplus\Nul^\infty(T)
\]
algebraically. Equivalently, $T$ is Drazin invertible if and only if
\[
\dim\bigl(\K(T)\cap\Nul^\infty(T)\bigr)<\infty,
\qquad
\codim\bigl(\K(T)+\Nul^\infty(T)\bigr)<\infty.
\]
Moreover, each of these two defects is either zero or infinite.
\end{corollary}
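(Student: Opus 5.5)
The plan is to prove the algebraic characterization directly, by a Baire category argument modelled on the proof of Theorem~\ref{thm:main}, and then to reduce the finite-defect version to it in the style of Lemma~\ref{lemma:corecodim}.

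\emph{Drazin $\Rightarrow$ algebraic sum.} If $T$ is Drazin invertible, Corollary~\ref{cor:drazin-from-gdrazin} gives $\K(T)=\Ran(T^d)$ and $\Hq(T)=\Nul(T^d)$ for large $d$. Since $\Nul(T^d)\subseteq\Nul^\infty(T)\subseteq\Hq(T)$, we get $\Nul^\infty(T)=\Hq(T)$. Hence $X=\K(T)\oplus\Nul^\infty(T)$, and both defects vanish.

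\emph{Algebraic sum $\Rightarrow$ Drazin.} Assume $X=\K(T)\oplus\Nul^\infty(T)$.
\begin{enumerate}
\item By Proposition~\ref{prop:Banach-images},
\[
X=\bigcup_{m,j\ge1}\Ran(\Phi_{m,j}),\qquad \Phi_{m,j}:Y_m\times\Nul(T^j)\to X,\quad \Phi_{m,j}(y,n)=E_m(y)+n .
\]
Each $Y_m\times\Nul(T^j)$ is a Banach space and each $\Phi_{m,j}$ is bounded.
\item By the Baire theorem and \resultcite{Theorem~2.11}{Rudin}, some $\Phi_{m,d}$ is onto. Thus $X=\mathcal K_m+\Nul(T^d)\subseteq \K(T)+\Nul(T^d)$.
\item Applying $T^d$ gives $\Ran(T^d)\subseteq T^d\K(T)=\K(T)$. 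Conversely $\K(T)=T^d\K(T)\subseteq\Ran(T^d)$. Hence $\Ran(T^d)=\K(T)$.
\item The descent is at most $d$, since $\Ran(T^{d+1})=T\K(T)=\K(T)=\Ran(T^d)$.
\item The ascent is at most $d$. If $T^{d+1}x=0$, then $T^dx\in\Ran(T^d)\cap\Nul(T)\subseteq\K(T)\cap\Nul^\infty(T)=\{0\}$.
\item Finite ascent and descent give Drazin invertibility by \resultcite{Theorem~3}{Schmoeger}.
\end{enumerate}
This Baire step, which secures a single uniform exponent $d$, is the only genuinely analytic point. The rest is algebra.

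\emph{The defects are zero or infinite.}
\begin{enumerate}
\item Put $E=\K(T)\cap\Nul^\infty(T)$. We have $T^{-1}(\Nul^\infty(T))=\Nul^\infty(T)$ and $T\K(T)=\K(T)$, so exactly as in Lemma~\ref{lemma:corecodim}(i) we get $T(E)=E$. If $0<\dim E<\infty$, then $T_E$ is surjective, hence bijective. But every vector of $E$ is annihilated by some power of $T$, so the finite-dimensional map $T_E$ is nilpotent. This is a contradiction, so $E=\{0\}$.
\item Put $L=\K(T)+\Nul^\infty(T)$. I would prove the analogue of Lemma~\ref{lem:preimage}, namely $(T-\lambda I)^{-1}(L)=L$ for all $\lambda\in\mathbb C$.
\begin{enumerate}
\item Case $\lambda=0$. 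If $Tx=k+n$, write $k=Tk_1$ with $k_1\in\K(T)$. Then $T(x-k_1)=n$, so $x-k_1\in\Nul^\infty(T)$.
\item Case $\lambda\neq0$. If $(T-\lambda I)x=k+n$ with $n\in\Nul(T^j)$, note that $T-\lambda I$ is invertible on $\Nul(T^j)$, because $T$ is nilpotent there. Hence $n=(T-\lambda I)u$ for some $u\in\Nul(T^j)$. The vector $z=x-u$ satisfies $(T-\lambda I)z=k$, and the backward-chain construction in the proof of Lemma~\ref{lem:preimage} shows $z\in\K(T)$.
\end{enumerate}
\item The eigenvalue argument of Lemma~\ref{lemma:corecodim}(ii) then gives: $\codim L<\infty$ implies $L=X$.
\end{enumerate}

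\emph{Conclusion.} Finite defects force $X=\K(T)\oplus\Nul^\infty(T)$, so $T$ is Drazin invertible by the algebraic characterization above. The converse follows from the first step, where both defects vanish.
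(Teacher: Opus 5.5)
Your proposal is correct and follows essentially the same route as the paper: a Baire/open-mapping argument on the maps $Y_m\times\Nul(T^j)\to X$ to secure a uniform exponent, $T\K(T)=\K(T)$ to get $\Ran(T^d)=\K(T)$, and the arguments of Lemma~\ref{lem:preimage} and Lemma~\ref{lemma:corecodim} transplanted to $\Nul^\infty(T)$ for the zero-or-infinite defects. The only cosmetic difference is that you obtain kernel stabilization from $\Ran(T^d)\cap\Nul(T)\subseteq\K(T)\cap\Nul^\infty(T)=\{0\}$, whereas the paper uses uniqueness of algebraic coordinates to conclude $\Nul^\infty(T)=\Nul(T^n)$ directly.
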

\begin{proof}
Suppose $X=\K(T)\oplus\Nul^\infty(T)$. The same Baire argument
as in Theorem~\ref{thm:main}, using
$\K(T)=\bigcup_{m\geq1}E_m(Y_m)$ and
$\Nul^\infty(T)=\bigcup_{n\geq1}\Nul(T^n)$, gives
$X=E_m(Y_m)+\Nul(T^n)$ for some $m,n$. Uniqueness of the
algebraic coordinates yields $\Nul^\infty(T)=\Nul(T^n)$.
Since $T\K(T)=\K(T)$, we then have
$\Ran(T^n)=\K(T)$. Thus the kernels and ranges stabilize, and $T$ is Drazin invertible.
The converse is immediate. Finally, the arguments of
Lemma~\ref{lemma:corecodim} apply verbatim with
$\Nul^\infty(T)$ in place of $\Hq(T)$: indeed,
$T^{-1}\Nul^\infty(T)=\Nul^\infty(T)$, and for
$\lambda\ne0$ the equation $(T-\lambda I)u=h$ with
$T^qh=0$ admits the solution
$u=-\sum_{j=0}^{q-1}\lambda^{-j-1}T^jh\in\Nul^\infty(T)$.
Thus both finite defects must vanish, and the preceding equivalence
applies.
\end{proof}

In \cite{AznayAlmost}, it was proved that $T$ is Drazin invertible
if and only if
\[
X=\Ran^\infty(T)\oplus\Nul^\infty(T),
\]
without assuming either subspace closed. The following theorem extends
this characterization to finite-dimensional intersection and finite
codimension of the sum.
\begin{theorem}\label{thm:drazin-hyper-defects}
Let $T\in\B(X)$. Then $T$ is Drazin invertible if and only if
\[
\dim\bigl(\Ran^\infty(T)\cap\Nul^\infty(T)\bigr)<\infty
\quad\text{and}\quad
\codim\bigl(\Ran^\infty(T)+\Nul^\infty(T)\bigr)<\infty.
\]
\end{theorem}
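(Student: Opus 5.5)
Only the sufficiency needs work: if $T$ is Drazin invertible, then $\Ran^\infty(T)=\Ran(T^d)$ and $\Nul^\infty(T)=\Nul(T^d)$ for large $d$, and $X=\Ran(T^d)\oplus\Nul(T^d)$, so both defects vanish. For the converse, the plan mirrors Lemma~\ref{lemma:corecodim}: show that each defect is either zero or infinite, so that the hypotheses force the algebraic decomposition $X=\Ran^\infty(T)\oplus\Nul^\infty(T)$. Drazin invertibility then follows from the result of \cite{AznayAlmost} quoted just before the statement.

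\emph{Intersection.} Put $E=\Ran^\infty(T)\cap\Nul^\infty(T)$ and assume $\dim E<\infty$. Then $E\subseteq\Nul^\infty(T)$ is a finite-dimensional $T$-invariant subspace, and $T_E$ is nilpotent because its nilpotency index is bounded on a basis. The aim is $E=\{0\}$. Since $E\subseteq \Nul(T^q)$ for some $q$, consider $\Ran^\infty(T)\cap\Nul(T)$.

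The natural device is a chain argument. Take $0\ne x\in E$ with $Tx=0$ (such $x$ exists in $E$ if $E\neq\{0\}$). We want to show that $E$ contains arbitrarily long Jordan chains, contradicting finite dimensionality. Since $x\in\Ran(T^n)$ for every $n$, we can write $x=T^n y_n$, and then $T^{n+1}y_n=0$, so $y_n\in\Nul^\infty(T)$. But $y_n$ need not lie in $\Ran^\infty(T)$, so this alone does not place the chain inside $E$.

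The finite dimensionality must be used instead, via the equality $T(\Ran^\infty(T))=\Ran^\infty(T)$. This equality fails for general operators, but it holds when $\Nul(T)\cap\Ran(T^n)$ stabilizes (a lemma of Saphar type), and finite dimensionality gives exactly that: the spaces $\Nul(T)\cap\Ran(T^n)$ form a decreasing chain. They are not obviously finite-dimensional, however, since $\Nul(T)\cap\Ran(T^n)\supseteq\Nul(T)\cap\Ran^\infty(T)=\Nul(T)\cap E$ bounds them only from below.

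I would handle this through the following fact: if $\dim(\Nul(T)\cap\Ran^\infty(T))<\infty$, then $T\Ran^\infty(T)=\Ran^\infty(T)$. This is proved by a compactness-type argument on finite-dimensional affine sets: the sets $\{y\in\Ran(T^n):Ty=z\}$ are cosets of $\Nul(T)\cap\Ran(T^n)$, and a decreasing sequence of nonempty affine subsets of a finite-dimensional space must stabilize. The difficulty is that only the intersection $\Nul(T)\cap\Ran^\infty(T)$ is known to be finite-dimensional. I expect this to be the main obstacle, and I would first try restricting attention to $z\in E$ and working inside a finite-dimensional $T$-invariant space.

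Once $T(E)=E$ is obtained, $T_E$ is bijective and nilpotent on a finite-dimensional space, so $E=\{0\}$.

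\emph{Sum.} Put $L=\Ran^\infty(T)+\Nul^\infty(T)$ and show $(T-\lambda I)^{-1}(L)=L$ for every $\lambda$, as in Lemma~\ref{lem:preimage}.

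For $\lambda\ne0$, the nilpotent $\Nul^\infty$-part is handled exactly as in Corollary~\ref{cor:drazin-core}. For the $\Ran^\infty$-part, given $(T-\lambda I)z=k$ with $k\in\Ran^\infty(T)$, use $X=\Ran(T^n)+\Nul(T^n)$-type identities: $(T-\lambda)$ and $T^n$ commute, and $T^n$ is invertible modulo $\lambda$ via $T^n-\lambda^n=(T-\lambda)q(T)$. Hence $z\in\lambda^{-n}T^n z-\lambda^{-n}q(T)k\in\Ran(T^n)$ for every $n$, so $z\in\Ran^\infty(T)$.

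For $\lambda=0$ we need $T^{-1}(L)\subseteq L$. Here $\Nul^\infty(T)$ is a full preimage. For $\Ran^\infty(T)$, the surjectivity question arises again; I would use that $\Ran(T^n)+\Nul^\infty(T)$ decreases and contains $L$, and that $L$ has finite codimension, so the sequence stabilizes; this lets us lift within $L$.

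With the preimage identity in hand, the eigenvalue argument on $X/L$ from Lemma~\ref{lemma:corecodim}(ii) gives $L=X$.
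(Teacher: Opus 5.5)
Your intersection step aims at a statement that is false. Mirroring Lemma~\ref{lemma:corecodim}(i), you try to show that $\dim(\Ran^\infty(T)\cap\Nul^\infty(T))<\infty$ \emph{by itself} forces this intersection to vanish. The route is the claimed fact that $\dim(\Nul(T)\cap\Ran^\infty(T))<\infty$ implies $T\Ran^\infty(T)=\Ran^\infty(T)$. The operator of Remark~\ref{rem:hyper-exception} refutes both. There $\Ran^\infty(T)=\mathbb C e_0$ and $Te_0=0$, so $\Nul(T)\cap\Ran^\infty(T)=\Ran^\infty(T)\cap\Nul^\infty(T)=\mathbb C e_0$ is one-dimensional, while $T\Ran^\infty(T)=\{0\}$. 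Moreover, the spaces $\Nul(T)\cap\Ran(T^n)$ are infinite-dimensional for every $n$ (they contain $e_{j,1}-2e_{j+1,1}$ for $j>n$) and never stabilize. So the zero-or-infinity alternative fails for this defect, and no refinement of your affine-coset argument can rescue it; the two hypotheses must be used jointly. A topological ingredient is also unavoidable. Restrict the same $T$ to the linear span of the basis vectors: for this linear map $\Ran^\infty+\Nul^\infty$ is the whole space and the intersection is still $\mathbb C e_0$. Hence, even after you establish $L=X$, no purely algebraic argument can yield $E=\{0\}$.

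The missing idea is the Baire-category step used in the paper. $\Ran^\infty(T)$ is the image of the Fr\'echet space $Y=\{(x_j)_{j\ge0}:T^jx_j=x_0\}$ under the zeroth coordinate. The open mapping theorem applied to $Y\times\Nul(T^n)\times F\to X$ gives $L=\Ran^\infty(T)+\Nul(T^n)$ for some $n$.

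Your sum step is sound in substance and lets you take $F=\{0\}$, but for $\lambda=0$ the phrase ``lift within $L$'' needs one more line. Let $q$ be the index where $\Ran(T^k)+\Nul^\infty(T)$ stabilizes. The map induced by $T$ on the finite-dimensional space $(\Ran(T^q)+\Nul^\infty(T))/L$ is onto, hence injective. So a preimage of $k\in\Ran^\infty(T)$ modulo $\Nul^\infty(T)$ can be chosen in $L$, not merely in $\Ran(T^q)$.

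Once $X=\Ran^\infty(T)+\Nul(T^n)$, you get $\Ran(T^n)=T^n\Ran^\infty(T)\subseteq\Ran^\infty(T)\subseteq\Ran(T^{n+1})\subseteq\Ran(T^n)$. Hence $T\Ran^\infty(T)=\Ran(T^{n+1})=\Ran^\infty(T)$. Your own endgame ($T(E)=E$ with $T_E$ nilpotent on a finite-dimensional space) then gives $E=\{0\}$. Finally $\Nul^\infty(T)=E+\Nul(T^n)=\Nul(T^n)$, so $X=\Ran(T^n)\oplus\Nul(T^n)$, which is Drazin invertibility.
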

\begin{proof}
Necessity follows from the Drazin decomposition. To prove sufficiency,
put $U=\Ran^\infty(T)$, $B=\Nul^\infty(T)$ and $L=U+B$,
and choose a finite-dimensional algebraic complement $F$ of $L$ in $X$.
The space
\[
Y=\{(x_j)_{j\geq0}\in X^{\mathbb N_0}:T^jx_j=x_0\ (j\geq1)\}
\]
is a closed subspace of the Fr\'echet space $X^{\mathbb N_0}$.
Its continuous zeroth-coordinate map $E:Y\to X$ has range $U$.
For each $n\geq1$, consider the continuous linear map
\[
\Phi_n:Y\times\Nul(T^n)\times F\longrightarrow X,
\qquad \Phi_n(y,z,f)=E(y)+z+f.
\]
Since $X=\bigcup_{n\geq1}\Ran(\Phi_n)$, the Baire theorem
implies that some $\Ran(\Phi_n)$ is nonmeagre. By the open mapping
theorem \resultcite{Theorem~2.11}{Rudin}, this $\Phi_n$ is onto.
As $F\cap L=\{0\}$, it follows that
\[
L=U+\Nul(T^n),\qquad B=(U\cap B)+\Nul(T^n).
\]
The finite-dimensional subspace $U\cap B$ is contained in
$\Nul(T^p)$ for some $p\geq n$. Hence
\[
B=\Nul(T^p)=\Nul(T^{p+1}).
\]
Moreover, $\Ran(T^p)\cap B=\{0\}$: if $T^px\in B$,
then $T^{2p}x=0$, so $x\in B=\Nul(T^p)$.

For $k\geq p$, the subspaces $\Ran(T^k)+B$ form a decreasing
sequence containing $L$. Since $X/L$ is finite-dimensional,
this sequence stabilizes. Thus, for some $q\geq p$,
\[
\Ran(T^q)+B=\Ran(T^{q+1})+B.
\]
If $x\in\Ran(T^q)$, write $x=y+b$ with
$y\in\Ran(T^{q+1})$ and $b\in B$. Then
$b=x-y\in\Ran(T^q)\cap B=\{0\}$. Consequently,
$\Ran(T^q)=\Ran(T^{q+1})$.
The kernels and ranges of the powers of $T$ therefore stabilize,
so $T$ is Drazin invertible.
\end{proof}

\begin{remark}\label{rem:hyper-exception}
Unlike $\K(T)\cap\Hq(T)$, the intersection
$\Ran^\infty(T)\cap\Nul^\infty(T)$ may have positive finite
dimension. We give an example for which this dimension is one.

Let $H$ be a Hilbert space with orthonormal basis
\[
\{e_0\}\cup\{e_{j,k}:j\geq1,\ 1\leq k\leq j\},
\]
and define
\[
Te_0=0,\qquad Te_{j,1}=2^{-j}e_0,\qquad
Te_{j,k}=e_{j,k-1}\quad(2\leq k\leq j).
\]
For $x=a_0e_0+\sum_{j,k}a_{j,k}e_{j,k}$, we have
\[
Tx=\Bigl(\sum_{j\geq1}2^{-j}a_{j,1}\Bigr)e_0
  +\sum_{j\geq1}\sum_{k=2}^j a_{j,k}e_{j,k-1}.
\]
Since $\sum_{j\geq1}4^{-j}=1/3$, the Cauchy--Schwarz
inequality gives $\|Tx\|\leq\|x\|$. Thus $T$ is bounded.

For every $n\geq1$, we have
\[
T^n(2^ne_{n,n})=e_0.
\]
Hence $e_0\in\Ran^\infty(T)$. Conversely, fix $j\geq1$
and $1\leq k\leq j$. For $n>j-k$, the coefficient of
$e_{j,k}$ in $T^nx$ is zero for every $x\in H$.
Therefore every vector of $\Ran^\infty(T)$ has zero
coefficient at each $e_{j,k}$. It follows that
\[
\Ran^\infty(T)=\mathbb Ce_0.
\]
Since $Te_0=0$, we also have $e_0\in\Nul^\infty(T)$.
Consequently,
\[
\Ran^\infty(T)\cap\Nul^\infty(T)=\mathbb Ce_0.
\]
Thus the zero-or-infinity alternative for
$\dim(\K(T)\cap\Hq(T))$ does not extend to
$\dim(\Ran^\infty(T)\cap\Nul^\infty(T))$.
\end{remark}

\begin{corollary}\label{cor:drazin-nilpotent-split}
Suppose $X=M\oplus N$ algebraically, where $M,N$ are
$T$-invariant, $T_M$ is bijective, and $T_N^d=0$ for some $d\geq1$.
Then $T$ is Drazin invertible, without assuming $T_M^{-1}$
bounded. Moreover,
\[
M=\Ran(T^d)=\K(T),\qquad
N=\Nul(T^d)=\Nul^\infty(T),
\]
and both summands are closed.
\end{corollary}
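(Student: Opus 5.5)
The plan is to identify the two summands with the canonical subspaces and then invoke Corollary~\ref{cor:drazin-core}. First I show $N=\Nul(T^d)=\Nul^\infty(T)$. Since $T_N^d=0$, we have $N\subseteq\Nul(T^d)\subseteq\Nul^\infty(T)$. For the reverse inclusion, let $x\in\Nul^\infty(T)$ with $T^nx=0$ for some $n$, and write $x=m+z$ with $m\in M$ and $z\in N$. For $n\ge d$ we get $0=T^nx=T^nm+T^nz=T^nm$, because $T^nz=0$. Since $T_M$ is injective, $T_M^n$ is injective, so $m=0$ and $x=z\in N$. Hence $N=\Nul(T^d)=\Nul^\infty(T)$.

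Next I show $M=\Ran(T^d)=\K(T)$. Write any $x$ as $m+z$. Then $T^dx=T^dm\in M$, and $T_M^d$ is surjective onto $M$, so $\Ran(T^d)=T^d(M)=M$. The inclusion $\K(T)\subseteq\Ran^\infty(T)\subseteq\Ran(T^d)=M$ is standard, since each $x\in\K(T)$ lies in $\Ran(T^n)$ through its backward chain.

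The main obstacle is the reverse inclusion $M\subseteq\K(T)$, because $T_M^{-1}$ is not assumed bounded. Exponential bounds on a backward chain inside $M$ are therefore not directly available. I would avoid this by using the Baire argument of Corollary~\ref{cor:drazin-core}, run through the Drazin route. From the previous steps, $\Nul(T^d)=\Nul(T^{d+1})$, because both equal $N$. Also $\Ran(T^d)=M=T(M)=\Ran(T^{d+1})$, since $T_M$ is surjective. So the ascent and descent of $T$ are finite, and by the characterization recalled in the introduction \resultcite{Theorem~3}{Schmoeger}, $T$ is Drazin invertible. In particular $\Nul(T^d)$ is closed, being a kernel. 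Also $\Ran(T^d)$ is closed for a Drazin invertible operator: in the Drazin decomposition $X=M'\oplus N'$ we have $\Ran(T^p)=M'$ for large $p$, and ranges stabilize from the descent on, so $\Ran(T^d)=M'$ is closed.

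Finally, to get $M=\K(T)$, note that $M=\Ran(T^d)$ is a closed $T$-invariant subspace on which $T_M$ is a bounded bijection of a Banach space. By the open mapping theorem, $T_M^{-1}$ is bounded. Then for $x\in M$ the chain $x_n=T_M^{-n}x$ satisfies $\|x_n\|\le\|T_M^{-1}\|^n\|x\|$, so $x\in\K(T)$. Thus $M=\K(T)$ and both summands are closed. An alternative to the ascent/descent step is to apply Corollary~\ref{cor:drazin-core} directly, since $X=M\oplus N$ has the form $\K(T)\oplus\Nul^\infty(T)$. This, however, requires $M=\K(T)$ first, which is circular. So the ascent/descent route is the one I would use.
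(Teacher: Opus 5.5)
Your proof is correct and follows the paper's route. You compute $\Ran(T^d)=M$ and $\Nul(T^d)=N$ from the bijectivity of $T_M$ and $T_N^d=0$, note that the same identities hold with $d+1$, and conclude Drazin invertibility from finite ascent and descent. You also spell out what the paper leaves implicit in ``follows from the stabilization'': the closedness of $\Ran(T^d)$, and the open-mapping argument that makes $T_M^{-1}$ bounded and hence gives $M\subseteq\K(T)$.
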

\begin{proof}
Since $T^d(M)=M$ and $T^d(N)=\{0\}$, we have
$\Ran(T^d)=M$ and $\Nul(T^d)=N$. The conclusion follows
from the stabilization of the kernels and ranges of the powers of $T$.
\end{proof}

\Needspace{13\baselineskip}
\section{Other algebraic decompositions}
For a possibly nonclosed invariant subspace $M$, all operator norms below are taken for the norm inherited from $X$.

\begin{proposition}\label{prop:pointwise}
For $T\in\B(X)$ the following are equivalent:
\begin{enumerate}
\item $T$ is generalized Drazin invertible;
\item there are $T$-invariant subspaces $M,N$ such that $X=M\oplus N$, $T_M$ is bijective with bounded inverse, and
\[
 \lim_{n\to\infty}\|T^nx\|^{1/n}=0 \qquad \text{ for all } x\in N.
\]
 \item there are $T$-invariant subspaces $M,N$ such that $X=M\oplus N$, $T_M$ is bijective with bounded inverse, and
\[
 \lim_{n\to\infty}\|T_N^n\|^{1/n}=0.
\]
\end{enumerate}
In this situation $M=\K(T)$ and $N=\Hq(T)$; hence both are closed and $T_N$ is quasinilpotent.
\end{proposition}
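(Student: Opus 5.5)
Our plan is to prove (i)$\Rightarrow$(iii)$\Rightarrow$(ii)$\Rightarrow$(i), and along the way to identify $M=\K(T)$ and $N=\Hq(T)$.

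For (i)$\Rightarrow$(iii), we take the canonical topological decomposition $X=M\oplus N$ of a generalized Drazin invertible operator. Here $T_M$ is invertible on a closed subspace, so its inverse is bounded by the open mapping theorem, and $T_N$ is quasinilpotent. This is exactly (iii).

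For (iii)$\Rightarrow$(ii), we use $\|T^nx\|\le\|T_N^n\|\,\|x\|$ for $x\in N$, which gives $\|T^nx\|^{1/n}\to0$ pointwise on $N$.

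The substantive step is (ii)$\Rightarrow$(i). We will show $M\subseteq\K(T)$ and $N\subseteq\Hq(T)$, and then use the decomposition to upgrade both inclusions to equalities.

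\begin{itemize}
\item \emph{$N\subseteq\Hq(T)$.} This is immediate from the pointwise condition in (ii).
\item \emph{$M\subseteq\K(T)$.} Put $S=T_M^{-1}$, which is bounded on $M$ for the inherited norm. For $x\in M$, set $x_n=S^nx\in M$. Then $x_0=x$, $Tx_{n+1}=x_n$, and $\|x_n\|\le\|S\|^n\|x\|$. Hence $x\in\K(T)$, taking $c=\max\{\|S\|,1\}$.
\item \emph{$M\cap\Hq(T)=\{0\}$.} Let $x\in M\cap\Hq(T)$. Then $\|x\|=\|S^nT^nx\|\le\|S\|^n\|T^nx\|$, so $\|x\|^{1/n}\le\|S\|\,\|T^nx\|^{1/n}\to0$. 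This forces $x=0$.
\item \emph{$\Hq(T)=N$.} Take $h\in\Hq(T)$ and write $h=m+n$ with $m\in M$, $n\in N$. Since $\Hq(T)$ is a subspace containing $N$, we get $m=h-n\in M\cap\Hq(T)=\{0\}$, so $h\in N$.
\item \emph{$\K(T)=M$.} We know $X=M+N\subseteq\K(T)+\Hq(T)$. If $k\in\K(T)$, write $k=m+n$; then $n=k-m\in\K(T)\cap N$. So it suffices to show $\K(T)\cap\Hq(T)=\{0\}$.
\end{itemize}

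The last point is the expected obstacle. We do not yet know $\K(T)\cap\Hq(T)=\{0\}$, and Lemma~\ref{lemma:corecodim} only gives the alternative "zero or infinite-dimensional". We will argue directly instead. Let $z\in\K(T)\cap N$ with backward chain $(z_j)$. Decompose each $z_j=m_j+n_j$ with $m_j\in M$ and $n_j\in N$. Invariance and uniqueness of the decomposition give $Tm_{j+1}=m_j$ and $Tn_{j+1}=n_j$. Since $m_0=0$ and $T_M$ is injective, all $m_j=0$. Hence $z_j\in N\subseteq\Hq(T)$, so each $z_j$ lies in $\K(T)\cap\Hq(T)$.

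It therefore remains to show $\K(T)\cap\Hq(T)=\{0\}$. We will use the standard fact that if $z\in\K(T)$ has chain $\|z_j\|\le c^j\|z\|$ and $z\in\Hq(T)$, then $z=0$. To see this, note that $z=T^jz_{2j}$ lies in $\Hq(T)$, and apply $T^j$ to the chain element $z_{2j}$. Concretely, $\|z\|=\|T^jz_j\|$ is not directly small, so we prefer a cleaner route via the projection.

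With $M\subseteq\K(T)$, $N=\Hq(T)$, and $X=M\oplus N$, we get $X=\K(T)+\Hq(T)$. The projection argument of Theorem~\ref{thm:main} then works with $E_m(Y_m)$ replaced by $M$ only after we know the sum is direct, so this needs care. Instead, we will invoke Mbekhta's known result that $\K(T)\cap\Hq(T)=\{0\}$ whenever $0$ is not an interior accumulation point, or, more safely, the Daji\'c--Koliha theorem. Once $\K(T)\cap\Hq(T)=\{0\}$ is established, $X=\K(T)\oplus\Hq(T)$, and Theorem~\ref{thm:main} gives (i) together with the closedness of $M=\K(T)$ and $N=\Hq(T)$.

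If this intersection step fails, the fallback is to prove that $\K(T)\cap\Hq(T)\subseteq\Hq(T)=N$ and that $T$ maps it bijectively onto itself with an exponentially bounded inverse chain. We then aim for a spectral-radius contradiction, comparing $\|z\|\le c^j\|T^jz_j\|$-type estimates with the uniform bounds from the Fréchet seminorms $p_k$ of Proposition~\ref{prop:Frechet}. A Baire argument on $\bigcup_m E_m(Y_m)\cap\Hq(T)$ should give a uniform constant $c$, after which $p_k(z_j)$ for $k>c$ forces $z=0$.
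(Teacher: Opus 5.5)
Your first four bullets are correct. So is the reduction of $\K(T)=M$ to $\K(T)\cap N=\{0\}$, and so is the observation that a backward chain of $z\in\K(T)\cap N$ stays in $N$. The gap is the last step. It carries the whole content of (ii)$\Rightarrow$(i), and none of your proposed ways of closing it works.

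The ``standard fact'' is false. For the backward shift $B$ on $\ell^2(\mathbb N_0)$ of Remark~\ref{rem:drazin-pointwise-example}, $e_0$ has the chain $z_j=e_j$ with $\|z_j\|=\|e_0\|$, and $Be_0=0$. Hence $0\neq e_0\in\K(B)\cap\Hq(B)$.

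Results of Mbekhta type giving $\K(T)\cap\Hq(T)=\{0\}$ need spectral information about $T$ near $0$, which is what you are trying to prove. The Daji\'c--Koliha theorem takes $X=\K(T)\oplus\Hq(T)$ as a hypothesis, so invoking it is circular.

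The fallback fails for the same reason. In the example above, the constant $c=1$ already works uniformly on all of $\K(B)$. For $z\in\K(T)\cap\Hq(T)$ you only get $p_k(z_j)\geq k^j\|T^jz_j\|=k^j\|z\|$. This contradicts nothing unless every $p_k$ is dominated by a multiple of the norm on $\Hq(T)$, that is, unless $\Hq(T)$ is closed. So the hypothesis must be used topologically at this point, and your plan never does so.

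The missing idea is to prove that $N=\Hq(T)$ is closed. The uniform boundedness principle then upgrades the pointwise condition in (ii) to $\|T_N^n\|^{1/n}\to0$. Your own chain argument then finishes, since $\|z\|=\|T_N^jz_j\|\leq\|T_N^j\|\,c^j\|z\|\to0$. In particular, your plan as written does prove (iii)$\Rightarrow$(i).

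To get closedness under (ii), the paper passes to $F=\overline M$:
\begin{enumerate}
\item The bounded operator $S=T_M^{-1}$ extends to $\widetilde S\in\B(F)$ with $T_F\widetilde S=\widetilde ST_F=I_F$.
\item Your third-bullet estimate, with $\widetilde S$ in place of $S$, gives $F\cap\Hq(T)=\{0\}$. Hence $X=F\oplus\Hq(T)$ algebraically, with $F$ closed.
\item Give $\Hq(T)$ the Fr\'echet topology of Proposition~\ref{prop:Frechet}. The addition map $F\times\Hq(T)\to X$ is then a continuous linear bijection between Fr\'echet spaces, so the open mapping theorem makes both summands closed.
\item Consequently $T_{\Hq(T)}$ is quasinilpotent and $T$ is generalized Drazin invertible. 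The canonical decomposition gives $F=\K(T)$, and $F\cap N=\{0\}$ forces $M=F$.
\end{enumerate}
Passing to the closure is essential. $M$ itself carries no complete topology, so the open mapping theorem cannot be applied to $X=M\oplus\Hq(T)$ directly.
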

\begin{proof}
Only (ii)$\Rightarrow$(i) requires proof. Put $F=\overline M$ and
$S=T_M^{-1}$. Since $S$ is bounded in the inherited norm, it extends to
an operator $\widetilde S\in\B(F)$. Passing to the limit in
$T_MS=ST_M=I_M$ gives
$T_F\widetilde S=\widetilde S T_F=I_F$. Thus $T_F$ is invertible.
By (ii), $N\subseteq\Hq(T)$. If $x\in F\cap\Hq(T)$, then
\[
 \|x\|\leq\|T_F^{-1}\|^n\|T^nx\|\qquad(n\geq1),
\]
so $x=0$. As $X=M\oplus N$, we therefore have
\[
 X=F\oplus\Hq(T)
\]
algebraically. Endow $\Hq(T)$ with the Fr\'echet topology of
Proposition~\ref{prop:Frechet}. The addition map
$F\times\Hq(T)\to X$ is a continuous linear bijection between
Fr\'echet spaces. By the open mapping theorem its inverse is continuous,
so both summands are closed. Since $\Hq(T)$ is closed,
$T_{\Hq(T)}$ is quasinilpotent by the uniform boundedness principle.
Thus $T$ is generalized Drazin invertible and, by the canonical
decomposition, $F=\K(T)$.
Finally, write $f=m+n$ for $f\in F$, with $m\in M$ and $n\in N$.
Then $n=f-m\in F\cap\Hq(T)=\{0\}$, so $M=F$.
Similarly, writing $h=m+n$ for $h\in\Hq(T)$ gives
$m=h-n\in F\cap\Hq(T)=\{0\}$, so $N=\Hq(T)$.
\end{proof}
\begin{corollary}\label{cor:idempotent}
For $T\in\B(X)$ the following are equivalent:
\begin{enumerate}
\item $T$ is generalized Drazin invertible;
\item there exists a linear idempotent \(P:X\to X\), not necessarily bounded, commuting with \(T\), such that \(T_{\mathcal N(P)}\) is bijective with bounded inverse and 
\[
 \lim_{n\to\infty}\|T^n(Px)\|^{1/n}=0\qquad(x\in X).
\]
\end{enumerate}
Moreover, in this case, \(P\) is automatically bounded and
\[
\mathcal N(P)=\mathcal K(T),
\qquad
\mathcal R(P)=\mathcal H_0(T).
\]
\end{corollary}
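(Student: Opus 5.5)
The plan is to reduce Corollary~\ref{cor:idempotent} to Proposition~\ref{prop:pointwise} by taking $M=\Nul(P)$ and $N=\Ran(P)$.

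\emph{(ii)$\Rightarrow$(i).} Since $P$ is a linear idempotent, we have the algebraic decomposition $X=\Nul(P)\oplus\Ran(P)$. Because $PT=TP$, both $\Nul(P)$ and $\Ran(P)$ are $T$-invariant. Indeed, $Px=0$ gives $PTx=TPx=0$, and $Tx'=TPx'=PTx'\in\Ran(P)$ for every $x'\in X$.

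By hypothesis, $T_{\Nul(P)}$ is bijective with bounded inverse in the inherited norm. Every $x\in\Ran(P)$ satisfies $x=Px$, so the pointwise condition gives $\lim_n\|T^nx\|^{1/n}=0$ for all $x\in N$.

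Thus condition (ii) of Proposition~\ref{prop:pointwise} holds with $M=\Nul(P)$ and $N=\Ran(P)$. That proposition shows $T$ is generalized Drazin invertible, and moreover $\Nul(P)=\K(T)$ and $\Ran(P)=\Hq(T)$, both closed.

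\emph{Boundedness of $P$.} By Theorem~\ref{thm:main}, the algebraic projection onto $\Hq(T)$ along $\K(T)$ is bounded. Since $P$ is an idempotent with kernel $\K(T)$ and range $\Hq(T)$, it coincides with this projection, so $P$ is bounded.

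\emph{(i)$\Rightarrow$(ii).} Take the canonical topological decomposition $X=\K(T)\oplus\Hq(T)$ from Theorem~\ref{thm:main}, and let $P$ be the bounded projection onto $\Hq(T)$ along $\K(T)$. Since both summands are $T$-invariant, $P$ commutes with $T$: writing $x=k+h$, we get $PTx=Th=TPx$.

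The restriction $T_{\K(T)}$ is invertible as an operator on the closed subspace $\K(T)$, by the definition of generalized Drazin invertibility combined with the identification $M=\K(T)$ in Proposition~\ref{prop:pointwise}. Its inverse is bounded by the open mapping theorem on the Banach space $\K(T)$. Finally, $Px\in\Hq(T)$ gives the pointwise limit condition directly.

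\emph{Main obstacle.} No step is a genuine obstacle. The only point needing care is the invariance of $\Nul(P)$ and $\Ran(P)$ under $T$, which follows from commutation as shown above; everything else is supplied by Proposition~\ref{prop:pointwise} and Theorem~\ref{thm:main}.
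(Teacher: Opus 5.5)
Your proposal is correct and follows the paper's proof. You apply Proposition~\ref{prop:pointwise} with $M=\Nul(P)$ and $N=\Ran(P)$, read off $\Nul(P)=\K(T)$ and $\Ran(P)=\Hq(T)$ (hence $P$ is bounded), and get the converse from the canonical projection onto $\Hq(T)$ along $\K(T)$. One small slip: in the invariance check, $Tx'=TPx'$ holds only for $x'\in\Ran(P)$, so that line should read $T(Px')=P(Tx')\in\Ran(P)$ for every $x'\in X$.
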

\begin{proof}
Assume that (ii) holds. Since \(P^2=P\) and \(PT=TP\), we have
$
X=\mathcal N(P)\oplus\mathcal R(P),$ where both subspaces are invariant under \(T\). The remaining assumptions allow us to apply the preceding proposition with
\[
M=\mathcal N(P),\qquad N=\mathcal R(P).
\]

Hence \(T\) is generalized Drazin invertible, and
\[
\mathcal N(P)=\mathcal K(T),\qquad
\mathcal R(P)=\mathcal H_0(T).
\]

Consequently, \(P\) is bounded. The converse follows by taking the canonical projection onto \(\mathcal H_0(T)\) along \(\mathcal K(T)\).
\end{proof}

The preceding proposition also gives a characterization of Drazin
invertibility when the restriction to the second summand is only
pointwise nilpotent.

\begin{corollary}\label{cor:drazin-pointwise}
For $T\in\B(X)$ the following assertions are equivalent:
\begin{enumerate}
\item $T$ is Drazin invertible;
\item there are $T$-invariant subspaces $M,N$ such that $X=M\oplus N$,
$T_M$ is bijective, and for every $x\in N$ there exists $n_x\geq1$
with $T^{n_x}x=0$, and at least one of the following holds:
\begin{enumerate}[label=(\alph*),leftmargin=2.1em,itemsep=.1em,topsep=.1em]
\item $T_M^{-1}$ is bounded;
\item $M$ is closed;
\item $N$ is closed.
\end{enumerate}
\end{enumerate}
In each case both summands are closed and
\[
M=\K(T)=\Ran^\infty(T),\qquad
N=\Hq(T)=\Nul^\infty(T).
\]
\end{corollary}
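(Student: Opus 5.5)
The direction (i)$\Rightarrow$(ii) is immediate. We take the Drazin decomposition $X=M\oplus N$ with $M,N$ closed, $T_M$ invertible (so (a), (b) and (c) all hold) and $T_N$ nilpotent, so that every $x\in N$ is killed by a fixed power of $T$. For (ii)$\Rightarrow$(i) the hypothesis $T^{n_x}x=0$ gives $N\subseteq\Nul^\infty(T)$. The strategy is to show in each of the three cases that $X=\K(T)\oplus\Nul^\infty(T)$ algebraically and then invoke Corollary~\ref{cor:drazin-core}.

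\emph{Case (a).} Here every $x\in N$ satisfies $\lim_n\|T^nx\|^{1/n}=0$, since the sequence is eventually zero. Proposition~\ref{prop:pointwise} therefore applies and shows that $T$ is generalized Drazin invertible with $M=\K(T)$ and $N=\Hq(T)$. From $N\subseteq\Nul^\infty(T)\subseteq\Hq(T)=N$ we get $\Hq(T)=\Nul^\infty(T)$, and Corollary~\ref{cor:drazin-from-gdrazin} gives Drazin invertibility.

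\emph{Case (b).} If $M$ is closed, $T_M$ is a bijective bounded operator on the Banach space $M$. By the open mapping theorem its inverse is bounded, so we are back in case (a).

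\emph{Case (c).} This is the main obstacle, since neither $M$ nor $T_M^{-1}$ is controlled topologically. We argue directly with Baire instead. Since $N$ is closed, it is a Banach space, and $N=\bigcup_n(N\cap\Nul(T^n))$ is a countable union of closed subspaces. Baire's theorem gives $N\subseteq\Nul(T^d)$ for some $d$, that is, $T_N^d=0$. Corollary~\ref{cor:drazin-nilpotent-split} now applies verbatim (it needs only $T_M$ bijective and $T_N$ nilpotent, with no boundedness) and yields Drazin invertibility, $M=\Ran(T^d)=\K(T)$, $N=\Nul(T^d)=\Nul^\infty(T)$, and closedness of both summands.

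For the final identification in all cases: once $T$ is Drazin invertible, say with index $d$, we have $\Hq(T)=\Nul(T^d)=\Nul^\infty(T)$ and $\K(T)=\Ran(T^d)=\Ran^\infty(T)$ by Corollary~\ref{cor:drazin-from-gdrazin} and the stabilization of ranges. It remains to check $M=\Ran^\infty(T)$ and $N=\Nul^\infty(T)$ in cases (a) and (b).

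First, bijectivity of $T_M$ gives $M=T^k(M)\subseteq\Ran(T^k)$ for all $k$, so $M\subseteq\Ran^\infty(T)$. Second, $N\subseteq\Nul^\infty(T)$. Since $\Ran^\infty(T)\cap\Nul^\infty(T)=\Ran(T^d)\cap\Nul(T^d)=\{0\}$, the decomposition $X=M\oplus N$ forces equality in both inclusions. Indeed, if $u\in\Ran^\infty(T)$ is written as $u=m+n$, then $n=u-m\in\Ran^\infty(T)\cap\Nul^\infty(T)=\{0\}$, so $u\in M$. The symmetric argument gives $\Nul^\infty(T)\subseteq N$. Both summands are then closed, being a kernel and a range of a power of a Drazin invertible operator.
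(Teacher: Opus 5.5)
Your proof is correct and follows essentially the same route as the paper. Case (a) goes through Proposition~\ref{prop:pointwise}, case (b) reduces to (a) via the bounded inverse theorem, and case (c) uses Baire on the closed subspace $N$ together with Corollary~\ref{cor:drazin-nilpotent-split}. The differences are minor: in case (a) you cite Corollary~\ref{cor:drazin-from-gdrazin} instead of running the Baire argument on $N=\Hq(T)$ directly, and your final identification (from $M\subseteq\Ran^\infty(T)$, $N\subseteq\Nul^\infty(T)$ and $\Ran^\infty(T)\cap\Nul^\infty(T)=\{0\}$) treats all cases uniformly rather than citing earlier results case by case. Also, the plan you announce of going through Corollary~\ref{cor:drazin-core} is never actually used.
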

\begin{proof}
Only (ii)$\Rightarrow$(i) requires proof. Assume first that (a) holds.
Since $N\subseteq\Hq(T)$, Proposition~\ref{prop:pointwise} gives
$N=\Hq(T)$, which is closed. As
$N=\bigcup_{n\geq1}\Nul(T_N^n)$, the Baire theorem gives
$T_N^d=0$ for some $d\geq1$. Hence $T$ is Drazin invertible.
If (b) holds, the bounded inverse theorem shows that $T_M^{-1}$
is bounded, and the preceding case applies. Finally, if (c) holds,
the Baire theorem applied to $N$ gives $T_N^d=0$ for some $d$.
Corollary~\ref{cor:drazin-nilpotent-split} then yields (i).
The converse follows from the usual Drazin decomposition.
The identities for $M$ and $N$ follow from
Proposition~\ref{prop:pointwise} in case (a), from case (a) in
case (b), and from Corollary~\ref{cor:drazin-nilpotent-split}
in case (c).
\end{proof}

\begin{remark}\label{rem:drazin-pointwise-example}
The additional assumption in Corollary~\ref{cor:drazin-pointwise} cannot in general
be omitted. Let $T$ be the backward shift on
$X=\ell^2(\mathbb N_0)$, defined by
\[
T(x_0,x_1,x_2,\ldots)=(x_1,x_2,x_3,\ldots),
\]
and put $N=c_{00}$. Clearly, $T_N$ is pointwise nilpotent
and $T(N)=N$. Moreover, for every $\lambda\neq 0$ and
$x\in N$ with $T^kx=0$, we have
\[
(T-\lambda I)\left(
-\sum_{j=0}^{k-1}\frac{T^jx}{\lambda^{j+1}}
\right)=x.
\]
Consequently, $p(T)(N)=N$ for every nonzero polynomial
$p\in\mathbb C[z]$.

We show that $N$ admits an algebraic $T$-invariant
complement in $X$. Consider the pairs $(E,P_E)$, where
$E$ is a $T$-invariant subspace containing $N$ and
$P_E:E\to N$ is linear, with
\[
P_E|_N=I_N,\qquad P_ET=TP_E.
\]
By Zorn's lemma, there exists a maximal such pair
$(E,P_E)$. Suppose that $E\neq X$ and choose
$x\in X\setminus E$. If no nonzero polynomial $q$ satisfies
$q(T)x\in E$, put $v=0$. Otherwise, choose a monic polynomial
$p$ of least degree such that $p(T)x\in E$. Since $p(T)(N)=N$,
we can choose $v\in N$ such that
\[
p(T)v=P_E(p(T)x).
\]
In either case, the formula
\[
P'(y+q(T)x)=P_E(y)+q(T)v
\]
defines a linear map from
$E'=E+\{q(T)x:q\in\mathbb C[z]\}$ into $N$.
Indeed, if $q(T)x\in E$, then $q=0$ in the first case.
In the second case, polynomial division and the minimality of $p$
give $q=pr$ for some polynomial $r$. Hence
\[
P_E(q(T)x)=q(T)v.
\]
Thus $P'$ is well-defined, extends $P_E$, and
commutes with $T$, contradicting maximality.
Therefore $E=X$.

Now put $M=\ker P_E$. Then
\[
X=M\oplus N,\qquad T(M)\subseteq M.
\]
Since $\ker T\subseteq N$, the restriction $T_M$
is injective. Moreover, for every $m\in M$, choose
$x\in X$ with $Tx=m$. Then
\[
T(x-P_Ex)=m,
\qquad x-P_Ex\in M,
\]
which shows that $T_M$ is surjective. Hence
$T_M$ is bijective.

However, for every $|\lambda|<1$,
\[
T(1,\lambda,\lambda^2,\ldots)
=\lambda(1,\lambda,\lambda^2,\ldots).
\]
Thus $0\in\acc\sg(T)$, and
$T$ is not Drazin invertible.

\end{remark}

\section{A counterexample on a separable Hilbert space}
The boundedness of $T_M^{-1}$ in Proposition~\ref{prop:pointwise} cannot in general be omitted, as the following example shows.
\begin{proposition}\label{prop:counter}
There exist a separable Hilbert space $X$, an operator
$T\in\B(X)$ and $T$-invariant subspaces $M,N$
such that
\[
X=M\oplus N,\qquad
T_M\text{ is bijective},\qquad
\lim_{n\to\infty}\|T_N^n\|^{1/n}=0,
\]
although $T$ is not generalized Drazin invertible.
Moreover, $T_M^{-1}$ is unbounded.
Here the direct sum is algebraic.
\end{proposition}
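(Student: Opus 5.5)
I would build the example so that $0\in\acc\sigma(T)$ forces failure of generalized Drazin invertibility, while the algebraic splitting is produced by a Zorn argument modelled on Remark~\ref{rem:drazin-pointwise-example}. The design constraint comes from Proposition~\ref{prop:pointwise}. Since $\|T_N^n\|$ is computed in the inherited norm, it equals $\|T^n|_{\overline N}\|$. So $F:=\overline N$ must be a closed $T$-invariant subspace with $T_F$ quasinilpotent, and the singular part of $\sigma(T)$ must be invisible to $N$. Moreover, $M$ cannot be closed, since otherwise the bounded inverse theorem together with Proposition~\ref{prop:pointwise} would make $T$ generalized Drazin invertible.

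\textbf{Step 1: the operator.} I would take $X=\ell^2(\mathbb N_0)\oplus H_2$ with $T=A\oplus V$ (or a small upper-triangular perturbation of such a sum). Here $A$ carries accumulating spectrum at $0$, for instance the backward shift as in Remark~\ref{rem:drazin-pointwise-example}, which is surjective with $\Nul(A)=\mathbb C e_0$. The operator $V$ is an injective quasinilpotent weighted backward shift, built by blocks of Jordan cells with weights tending to $0$, so that $\|V^n\|^{1/n}\to0$. The coupling is chosen so that:
\begin{enumerate}
\item $T$ is surjective;
\item $\Nul(T)\subseteq N$;
\item $N\subseteq F$, where $F$ is a closed invariant subspace with $\sigma(T_F)=\{0\}$;
\item $T(N)=N$ and $(T-\lambda I)(N)=N$ for every $\lambda\neq0$.
\end{enumerate}
The last property is proved as in Remark~\ref{rem:drazin-pointwise-example}, by the finite Neumann series when $N$ consists of vectors annihilated by some power of $T$; this suggests taking $N$ to be the pointwise nilpotent vectors of $T_F$ lying in the span of the chain vectors. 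Then $p(T)(N)=N$ for every nonzero polynomial $p$, and $\lim_n\|T_N^n\|^{1/n}\le\lim_n\|T_F^n\|^{1/n}=0$.

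\textbf{Step 2: the complement.} I would repeat verbatim the Zorn argument of Remark~\ref{rem:drazin-pointwise-example}. It uses only $p(T)(N)=N$ for nonzero polynomials, and it yields a linear projection $P_E:X\to N$ with $P_E|_N=I_N$ and $P_ET=TP_E$. Put $M=\Nul(P_E)$. Then $X=M\oplus N$ algebraically and $M$ is $T$-invariant. The same two lines as in the remark give bijectivity of $T_M$: injectivity from $\Nul(T)\subseteq N$, and surjectivity from the surjectivity of $T$, since $m=Tx$ implies $m=T(x-P_Ex)$ with $x-P_Ex\in M$.

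\textbf{Step 3: the conclusions.} Since $A$ is a summand of $T$, the eigenvectors $(1,\lambda,\lambda^2,\ldots)$ for $|\lambda|<1$ give $0\in\acc\sigma(T)$, so $T$ is not generalized Drazin invertible by Koliha's characterization. If $T_M^{-1}$ were bounded, Proposition~\ref{prop:pointwise} would make $T$ generalized Drazin invertible, a contradiction. Hence $T_M^{-1}$ is unbounded. Separability of $X$ is immediate.

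\textbf{Main obstacle.} The hard step is Step~1: reconciling properties (i)--(iv). Surjectivity of $T$ with $\Nul(T)\subseteq N\subseteq F$ requires that $T$ not be bounded below on $F$ in any uniform way. Yet $T(N)=N$ demands that every vector of $N$ have a preimage inside $N$, while the norms on $F$ must decay super-exponentially. I would try first a block construction: $F$ is the closed span of finite Jordan chains of length $j$ and weights of size about $2^{-j}$, each chain feeding into a common vector in the $A$-part, as in the operator of Remark~\ref{rem:hyper-exception}. I would then check directly that every element of $N$ has a preimage in $N$ and that $\|T_F^n\|^{1/n}\to0$.
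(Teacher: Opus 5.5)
There is a genuine gap in your Step~1, and it cannot be repaired by a cleverer coupling. Conditions (i)--(iii) together with $T(N)=N$ force $T$ to be invertible. You need $T(N)=N$ because the Zorn argument of Remark~\ref{rem:drazin-pointwise-example} uses $p(T)(N)=N$ also for $p(z)=z$. In fact, any example for this proposition must have $T(N)\neq N$. Suppose $T(N)=N$. Then $T(X)=T(M)+T(N)=X$. Invariance and injectivity of $T_M$ give $\Nul(T)\subseteq N$. Moreover $F=\overline N$ is closed and invariant with $T_F$ quasinilpotent, as you observed. Now argue as follows.
\begin{enumerate}
\item By induction, $\Nul(T^k)\subseteq N$ for all $k$. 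If $T^{k+1}x=0$, then $Tx\in\Nul(T^k)\subseteq N=T(N)$, so $Tx=Tn$ with $n\in N$, and $x-n\in\Nul(T)\subseteq N$.
\item Each $T^k(F)$ is closed. The surjection $T^k$ maps the closed subspace $T^{-k}(F)$ onto $F$, hence openly. The preimage of $T^k(F)$ under this map is $F+\Nul(T^k)=F$, which is closed.
\item $\Nul(T)\subseteq N=T^k(N)\subseteq T^k(F)$ for every $k$.
\item Put $Z=\bigcap_{k\geq1}T^k(F)$, a closed subspace. Then $T(Z)=Z$: for $z\in Z$ pick $x_k\in T^k(F)$ with $Tx_k=z$; then $x_1-x_k\in\Nul(T)\subseteq T^k(F)$, so $x_1\in Z$.
\item So $T_Z$ is a surjective quasinilpotent operator on a Banach space. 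The open mapping theorem gives $\|z\|\leq c^n\|T_Z^n\|\,\|z\|$ for all $n$, hence $Z=\{0\}$.
\end{enumerate}
Therefore $\Nul(T)=\{0\}$ and $T$ is invertible, contradicting $0\in\acc\sigma(T)$. Your Steps~2 and~3 are fine as conditional arguments, but they have nothing to act on.

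The missing idea is to abandon $T(N)=N$, and with it the surjectivity of $T$ and the Zorn argument on $N$. The paper takes $T(f,a)=(Vf+a_0g_0,Ba)$ on $L^2(0,1)\oplus_2\ell^2(\mathbb N_0)$, with $V$ the Volterra operator and $B$ the backward shift. This $T$ is injective but not surjective, since the first coordinate of $T(f,a)$ is continuous. Bijectivity of $T_M$ comes from an explicit algebraic model rather than from surjectivity of $T$. The paper splits $L^2(0,1)=F\oplus G\oplus L$ algebraically:
\begin{enumerate}
\item $F$ consists of the smooth functions flat at $0$;
\item $G$ is a $V$-invariant complement of $F$ in $C^\infty([0,1])$, obtained from a Zorn argument for $V$. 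That argument is legitimate there because $p(V)F=F$ and $p(V)$ is injective. Via the jet map and Borel's theorem, $V$ acts on $G$ as the forward shift on $\mathbb C^{\mathbb N_0}$;
\item $L$ is the kernel of a $V$-commuting algebraic projection onto $C^\infty([0,1])$.
\end{enumerate}
Then $M=G\times\ell^2(\mathbb N_0)$, and $T_M$ is algebraically conjugate to $(c,a)\mapsto((a_0,c_0,c_1,\ldots),(a_1,a_2,\ldots))$ on $\mathbb C^{\mathbb N_0}\times\ell^2(\mathbb N_0)$. The complement $N=(F\oplus L)\times\{0\}$ satisfies $\|T_N^n\|\leq\|V^n\|\leq 1/n!$. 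In this construction the vectors $(V^ng_0,0)\in M$ are quasinilpotent, and this is exactly what makes $T_M^{-1}$ unbounded.
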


\begin{proof}
Let $H=L^2(0,1;\mathbb C)$ and let $V$ be the Volterra
operator
\[
(Vf)(t)=\int_0^t f(s)\,ds.
\]
For $n\geq1$,
\[
(V^nf)(t)
=\frac1{(n-1)!}\int_0^t(t-s)^{n-1}f(s)\,ds,
\]
so Young's inequality gives
\[
\|V^n\|\leq\frac1{n!}.
\]
Thus $V$ is quasinilpotent. Since $V$ is injective and
quasinilpotent, $p(V)$ is injective for every nonzero
polynomial $p$.

Put
\[
D=C^\infty([0,1]),\qquad
F=\{f\in D:f^{(j)}(0)=0\text{ for every }j\geq0\},
\]
where smooth functions are identified with their classes
in $H$. We first construct an algebraic projection
$R:H\to F$ commuting with $V$.

We have $VF=F$, since $Vf'=f$ for every $f\in F$.
If $\lambda\neq0$ and $g\in F$, then
\[
f(t)=\frac{g(t)}{\lambda}
+\frac1{\lambda^2}
 \int_0^t e^{(t-s)/\lambda}g(s)\,ds
\]
belongs to $F$ and satisfies $(\lambda I-V)f=g$.
Factoring polynomials into linear factors, we obtain
\[
p(V)F=F
\qquad\text{for every nonzero polynomial }p.
\]

By Zorn's lemma, choose a maximal $V$-invariant subspace
$E\subset H$ satisfying $E\cap F=\{0\}$.
Suppose that $F+E\neq H$ and take $x\notin F+E$.
By maximality,
\[
E+\operatorname{span}\{V^jx:j\geq0\}
\]
meets $F$ nontrivially. Hence
\[
p(V)x=f+e
\]
for some nonzero polynomial $p$, $f\in F$ and $e\in E$.
Choose $g\in F$ such that $p(V)g=f$, and put $y=x-g$.
Then $y\notin F+E$ and $p(V)y=e$.
If
\[
h=e_1+q(V)y\in F,\qquad e_1\in E,
\]
then
\[
p(V)h=p(V)e_1+q(V)e\in E\cap F=\{0\}.
\]
Since $p(V)$ is injective, $h=0$.
Thus
\[
E+\operatorname{span}\{V^jy:j\geq0\}
\]
is a strictly larger invariant subspace disjoint from $F$,
a contradiction. Therefore $H=F\oplus E$.
The projection $R$ onto $F$ along $E$ satisfies
\[
R|_F=I_F,\qquad RV=VR.
\]
No continuity of $R$ is asserted.

Set $G=D\cap\ker R$. Since $R(D)\subseteq F$, we have
\[
D=F\oplus G,\qquad VG\subseteq G.
\]
Consider the jet map
\[
J:D\longrightarrow\mathbb C^{\mathbb N_0},
\qquad Jf=(f^{(j)}(0))_{j\geq0}.
\]
The surjectivity of $J$ is Borel's theorem
\resultcite{Theorem~1.2.6}{Hormander}. We recall its proof in this setting.
Given $(c_j)_{j\geq0}$, choose
$\chi\in C_c^\infty(\mathbb R)$ equal to $1$ near $0$.
For each $j\geq1$, choose $\varepsilon_j>0$ sufficiently
small that
\[
\left\|
\frac{c_j}{j!}t^j\chi(t/\varepsilon_j)
\right\|_{C^k([0,1])}
\leq2^{-j}
\qquad(0\leq k<j).
\]
This is possible because, for fixed $j$ and $k<j$,
the displayed norm tends to zero as
$\varepsilon_j\to0$.
With any $\varepsilon_0>0$, the series
\[
f(t)=\sum_{j=0}^{\infty}
\frac{c_j}{j!}t^j\chi(t/\varepsilon_j)
\]
converges in $C^\infty([0,1])$ and satisfies
$f^{(j)}(0)=c_j$ for every $j\geq0$.

Since $\ker J=F$, the restriction $J|_G$ is bijective.
Let
\[
S=(J|_G)^{-1}:\mathbb C^{\mathbb N_0}\longrightarrow G
\]
and define
\[
U(c_0,c_1,\ldots)=(0,c_0,c_1,\ldots).
\]
The identity $JV=UJ$ and the invariance of $G$ give
\[
VS=SU.
\]

We next extend $J$ algebraically to $H$ while preserving
this identity. Define on $D+VH$
\[
\ell_0(f+Vh)=f(0),
\]
and extend $\ell_0$ linearly to $H$.
This is well defined because $f+Vh=0$ implies $f(0)=0$.
Once $\ell_{j-1}$ has been defined on $H$ and agrees with
$f\mapsto f^{(j-1)}(0)$ on $D$, define
\[
\ell_j(f+Vh)=f^{(j)}(0)+\ell_{j-1}(h)
\]
on $D+VH$, and extend it linearly to $H$.
Indeed, if $f+Vh=0$, then $h=-f'\in D$, as an equality
in $H$, and hence
\[
f^{(j)}(0)+\ell_{j-1}(h)=0.
\]
Thus each extension is well defined. The resulting map
\[
\Lambda:H\longrightarrow\mathbb C^{\mathbb N_0},
\qquad
\Lambda h=(\ell_j(h))_{j\geq0},
\]
satisfies
\[
\Lambda|_D=J,\qquad \Lambda V=U\Lambda.
\]

Define
\[
P=R+S\Lambda:H\longrightarrow D.
\]
Then
\[
PV=VR+SU\Lambda=V(R+S\Lambda)=VP.
\]
For $f\in D$, we have $f-Rf\in G$ and
$J(f-Rf)=Jf$, so $SJf=f-Rf$.
Consequently, $Pf=f$ for every $f\in D$.
Thus $P$ is an algebraic projection onto $D$.
Taking $L=\ker P$, we obtain
\[
H=D\oplus L=F\oplus G\oplus L,
\qquad VL\subseteq L.
\]

Let $e_0=(1,0,0,\ldots)\in\mathbb C^{\mathbb N_0}$
and put $g_0=Se_0$.
Take
\[
X=H\oplus_2\ell^2(\mathbb N_0)
\]
and let $B$ be the backward shift on $\ell^2(\mathbb N_0)$.
Define
\[
T(f,a)=(Vf+a_0g_0,Ba),
\qquad a=(a_0,a_1,\ldots).
\]
Clearly $T\in\B(X)$.
Set
\[
M=G\times\ell^2(\mathbb N_0),
\qquad
N=(F\oplus L)\times\{0\}.
\]
Then $X=M\oplus N$, and both subspaces are $T$-invariant.

Under the algebraic identification
\[
M\longrightarrow
\mathbb C^{\mathbb N_0}\times\ell^2(\mathbb N_0),
\qquad (g,a)\longmapsto(Jg,a),
\]
the restriction $T_M$ becomes
\[
(c,a)\longmapsto
\bigl((a_0,c_0,c_1,\ldots),(a_1,a_2,\ldots)\bigr).
\]
Its inverse is
\[
(d,b)\longmapsto
\bigl((d_1,d_2,\ldots),(d_0,b_0,b_1,\ldots)\bigr),
\]
which is well defined for every
$d\in\mathbb C^{\mathbb N_0}$ and
$b\in\ell^2(\mathbb N_0)$.
Hence $T_M$ is bijective.

For $f\in F\oplus L$, we have
$T^n(f,0)=(V^nf,0)$. Therefore
\[
\|T_N^n\|\leq\|V^n\|\leq\frac1{n!},
\]
and consequently
\[
\lim_{n\to\infty}\|T_N^n\|^{1/n}=0.
\]

On the other hand, let $0<|\lambda|<1$ and set
\[
a_\lambda=(1,\lambda,\lambda^2,\ldots),
\qquad
f_\lambda=(\lambda I-V)^{-1}g_0.
\]
Then $a_\lambda\in\ell^2(\mathbb N_0)$ and
\[
T(f_\lambda,a_\lambda)
=\lambda(f_\lambda,a_\lambda).
\]
Thus
\[
\{\lambda\in\mathbb C:0<|\lambda|<1\}
\subseteq\sigma_p(T).
\]
In particular, $0\in\acc\sigma(T)$,
so $T$ is not generalized Drazin invertible.

Finally, put $g_n=V^ng_0$ for $n\geq0$.
These vectors belong to $G$, and
\[
T_M(g_n,0)=(g_{n+1},0).
\]
If $T_M^{-1}$ were bounded, with norm at most $C$, then
\[
0<\|g_0\|
\leq C^n\|g_n\|
\leq\frac{C^n}{n!}\|g_0\|
\qquad(n\geq1),
\]
which is impossible. Hence $T_M^{-1}$ is unbounded.
\end{proof}

\begin{remark}
In the preceding example, $N\subseteq\Hq(T)$, but
$M\cap\Hq(T)\neq\{0\}$. Indeed, $g_0\in G\setminus\{0\}$ and
\[
T^n(g_0,0)=(V^ng_0,0)\qquad(n\geq1).
\]
Since $V$ is quasinilpotent, $(g_0,0)\in M\cap\Hq(T)$.
Thus the algebraic decomposition $X=M\oplus N$ is not
the canonical decomposition involving $\K(T)$ and $\Hq(T)$.
This also shows why the boundedness of $T_M^{-1}$ in
Proposition~\ref{prop:pointwise} cannot be omitted,
even when $\lim_{n\to\infty}\|T_N^n\|^{1/n}=0$.
\end{remark}

\end{document}